\newcolumntype{L}[1]{>{\raggedright\let\newline\\\arraybackslash\hspace{0pt}}m{#1}}
\newcolumntype{C}[1]{m{#1}}
\newcolumntype{R}[1]{>{\raggedleft\let\newline\\\arraybackslash\hspace{0pt}}m{#1}}
\def\@noindentfalse{\global\let\if@noindent\iffalse}
\def\@noindenttrue {\global\let\if@noindent\iftrue}
\def\@aftertheorem{%
  \@noindenttrue
  \everypar{%
    \if@noindent%
      \@noindentfalse\clubpenalty\@M\setbox\z@\lastbox%
    \else%
      \clubpenalty \@clubpenalty\everypar{}%
    \fi}}
\theoremstyle{plain}
\newtheorem{theorem}{Theorem}[section]
\newtheorem{proposition}[theorem]{Proposition}
\theoremstyle{definition}
\newtheorem{remark}[theorem]{Remark}
\newtheorem{definition}[theorem]{Definition}
\def\be#1{\begin{equation*}#1\end{equation*}}
\def\ben#1{\begin{equation}#1\end{equation}}
\def\besn#1{\begin{equation}\begin{split}#1\end{split}\end{equation}}
\def\ba#1{\begin{align*}#1\end{align*}}
\def\given{\typeout{Command 'given' should only be used within bracket command}}
\newcounter{@bracketlevel}
\def\@bracketfactory#1#2#3#4#5#6{
\expandafter\def\csname#1\endcsname##1{%
\addtocounter{@bracketlevel}{1}%
\global\expandafter\let\csname @middummy\alph{@bracketlevel}\endcsname\given%
\global\def\given{\mskip#5\csname#4\endcsname\vert\mskip#6}\csname#4l\endcsname#2##1\csname#4r\endcsname#3%
\global\expandafter\let\expandafter\given\csname @middummy\alph{@bracketlevel}\endcsname
\addtocounter{@bracketlevel}{-1}}%
}
\def\bracketfactory#1#2#3{%
\@bracketfactory{#1}{#2}{#3}{relax}{1mu plus 0.25mu minus 0.25mu}{0.6mu plus 0.15mu minus 0.15mu}
\@bracketfactory{b#1}{#2}{#3}{big}{1mu plus 0.25mu minus 0.25mu}{0.6mu plus 0.15mu minus 0.15mu}
\@bracketfactory{bb#1}{#2}{#3}{Big}{2.4mu plus 0.8mu minus 0.8mu}{1.8mu plus 0.6mu minus 0.6mu}
\@bracketfactory{bbb#1}{#2}{#3}{bigg}{3.2mu plus 1mu minus 1mu}{2.4mu plus 0.75mu minus 0.75mu}
\@bracketfactory{bbbb#1}{#2}{#3}{Bigg}{4mu plus 1mu minus 1mu}{3mu plus 0.75mu minus 0.75mu}
}
\newcounter{ctr}\loop\stepcounter{ctr}\edef\X{\@Alph\c@ctr}%
\edef\csname s\X\endcsname{\noexpand\mathscr{\X}}
\edef\csname c\X\endcsname{\noexpand\mathcal{\X}}
\edef\csname b\X\endcsname{\noexpand\boldsymbol{\X}}
\edef\csname I\X\endcsname{\noexpand\mathbbm{\X}}
\edef\csname r\X\endcsname{\noexpand\mathrm{\X}}
\let\@IE\IE\let\IE\undefined
\newcommand{\IE}{\mathop{{}\@IE}\mathopen{}}
\let\@IP\IP\let\IP\undefined
\newcommand{\IP}{\mathop{{}\@IP}}
\def\now{%
\minute=\time%
\hour=\time \divide \hour by 60%
\hourMins=\hour \multiply\hourMins by 60%
\advance\minute by -\hourMins%
\zeroPadTwo{\the\hour}:\zeroPadTwo{\the\minute}%
}
\def\zeroPadTwo#1{\ifnum #1<10 0\fi#1}
\numberwithin{equation}{section}
\renewcommand\section{\@startsection {section}{1}{\z@}%
{-3.5ex \@plus -1ex \@minus -.2ex}%
{1.3ex \@plus.2ex}%
{\center\small\sc\mathversion{bold}\MakeUppercase}}
\def\subsection#1{\@startsection {subsection}{2}{0pt}%
{-3.5ex \@plus -1ex \@minus -.2ex}%
{1ex \@plus.2ex}%
{\bf\mathversion{bold}}{#1}}
\def\subsubsection#1{\@startsection{subsubsection}{3}{0pt}%
{\medskipamount}%
{-10pt}%
{\normalsize\itshape}{\kern-2.2ex. #1.}}
\def\blfootnote{\xdef\@thefnmark{}\@footnotetext}
\renewcommand{\cite}{\citet}
\def\^#1{\ifmmode {\mathaccent"705E #1} \else {\accent94 #1} \fi}
\def\~#1{\ifmmode {\mathaccent"707E #1} \else {\accent"7E #1} \fi}
\edef\-#1{\noexpand\ifmmode {\noexpand\bar{#1}} \noexpand\else \-#1\noexpand\fi}
\def\>#1{\vec{#1}}
\def\.#1{\dot{#1}}
\def\wt#1{\widetilde{#1}}
\def\atop{\@@atop}
\renewcommand{\leq}{\leqslant}
\renewcommand{\geq}{\geqslant}
\renewcommand{\phi}{\varphi}
\newcommand{\eq}{\eqref}
\newcommand{\law}{\mathscr{L}}
\newcommand{\Po}{\mathrm{Po}}
\def\sp#1{^{(#1)}}
\def\eqd{\stackrel{d}{=}}
\newcommand{\CRP}{\mathrm{CRP}}
\newcommand{\PD}{\mathrm{PD}}
\newcommand{\CF}{\mathrm{CF}}
\newcommand{\HM}{\mathrm{HM}}
\newcommand{\ICRP}{\mathrm{ICRP}}
\newcommand{\SICRP}{\mathrm{SICRP}}
\newcommand{\HICRP}{\mathrm{HICRP}}
\newcommand{\Dir}{\mathrm{Dir}}
\newcommand{\Gam}{\mathrm{Gamma}}
\newcommand{\samp}{\normalfont\textrm{\small SAMP}}
\newcommand{\frag}{\normalfont\textrm{\small FRAG}}
\def\fragd#1{\normalfont\textrm{\small FRAG}^{(#1)}}
\def\ifragd#1{\normalfont\textrm{\small I}\fragd{#1}}
\newcommand{\partit}{\normalfont\textrm{\small PART}}
\newcommand{\compos}{\normalfont\textrm{\small COMP}}
\def\coagd#1{\normalfont\textrm{\small COAG}^{(#1)}}
\def\icoagd#1{\normalfont\textrm{\small I}\coagd{#1}}
\newcommand{\inter}{\normalfont\textrm{\small INTER}}
\newcommand{\emm}{\mathfrak{m}}
\DeclareMathOperator{\gammadist}{Gamma}
\newcommand{\iidsim}{\overset{\mathrm{i.i.d.}}{\sim}}
\newcommand{\dee}{\mathrm{d}}
\newcommand{\give}{{\hspace{0.08em}|\hspace{0.08em}}}
\def\[#1\]{\begin{equation}\begin{aligned}#1\end{aligned}\end{equation}}
\newcommand{\calI}{{\mathcal{I}}}
\newcommand{\calB}{{\mathcal{B}}}
\newcommand{\bbN}{\mathbb{N}}
\newcommand{\calN}{{\mathcal{N}}}
\newcommand{\bbP}{\mathbb{P}}
\newcommand{\1}[1]{\mathrm{1}_{\{#1\}}}
\newcommand{\mathwrap}[1]{\texorpdfstring{#1}{TEXT}}
\newcommand{\tB}{\widetilde{B}}
\begin{document}

\title{\sc\bf\large\MakeUppercase{
Network and interaction models for data with hierarchical granularity
via
fragmentation and coagulation
}
}
\author{\sc  Lancelot~F.~James, Juho Lee, and 
Nathan Ross}
\date{\it Hong Kong University of Science and Technology, Korea Advanced Institute of Science and Technology, and University~of~Melbourne
}

\maketitle

\begin{abstract}
We introduce a nested family of Bayesian nonparametric models for network and interaction data 
with a hierarchical granularity structure that naturally arises through
finer and coarser population labelings. In the case of network data,
the structure is easily visualized by merging and shattering vertices, while respecting
the edge structure. We further develop Bayesian inference procedures for the model family, and apply
them to synthetic and real data. 
The family provides a connection of practical and theoretical interest between the Hollywood model of 
Crane and Dempsey, and the generalized-gamma graphex model of Caron and Fox. 
A key ingredient for the construction of the family 
 is fragmentation and coagulation duality for integer partitions,
and for this we develop novel duality relations that generalize those of Pitman and Dong, Goldschmidt and Martin.
The duality is also crucially used in  our inferential procedures. 
\end{abstract}


\tableofcontents

\section{Introduction}

This paper is concerned with the statistical analysis of interaction or association
data, which consists of a collection of subgroups of a population that have interacted in some way. For example, a corpus of scientific papers has a population of authors, and 
each paper consists of a subset of those authors. The data in this case consists of the multi-set of these subsets of authors. 
For another example, a population of animals could be grouped into pairs according to their mating; such cases where all interactions are pairs can be represented as networks. 
See \cite[Section~1.1]{Dempsey2022} and references there for other examples of association data with some discussion of different perspectives.

In such data, it may be the case that individuals from the population can be partitioned by
grouping under some trait. In the scientific corpus data, each author has a main institutional affiliation, and in the mating data, each bird has a tree they were born in. Relabeling individuals according to their group leads to a new set of data that is coarser than the original data. There may be more levels of granularity, e.g., by relabeling institutions by country, or
trees by region of a forest. Going the other way, the data can be refined by additional information, such as including subject areas in the corpus data. At the finest level, all labels are distinct, and at the coarsest, all labels are the same. For a simple concrete example, consider a corpus of four papers of a population of authors labeled $A1, A2, A3, A4$ given by
$\clc{(A1, A2),  (A3, A4), (A4,A3), (A4,A1)}$ (here we assume author order is meaningful). Now assume $A1$ and $A2$ are at UC Berkeley in the USA, while $A3$ is at Penn State  in the USA, and $A4$ is at NUS in Singapore. Then for the population of institutions, the association data becomes  
$\clc{(UCB, UCB), (PSU, NUS), (NUS, PSU), (NUS,UCB)}$, and  it becomes $\clc{(USA, USA), (USA, SIN), (SIN, USA), (SIN, USA)}$  for the population of countries (note here the collection of data is a multiset). 
Inferential procedures will be improved by taking into account this more \emph{hierarchically granular} structure.

An important case of this hierarchical interaction data that we pay special attention to here,  
is when all associations are pairs (as in the example above).  In this case, the data can be viewed as a \emph{network} or \emph{graph} where the nodes are the individuals, (directed) edges represent the pairwise associations, and we allow for loops and multiple edges between vertices. 
Statistical network modelling is a large and developing area with implications
in a huge number of areas including biology and sociology; see e.g, \cite{Crane2021} for a recent overview. 
In this case, coarsening the data by relabeling individuals according to some trait corresponds to merging vertices in the graph while respecting edges, and the dual operation of refining the data with additional information corresponds to shattering vertices. See Figure~\ref{fig:cfvert} for a network representation of the three levels of granularity for the example above.

\begin{figure}[!ht]
    \centering
    \includegraphics[width=0.95\linewidth]{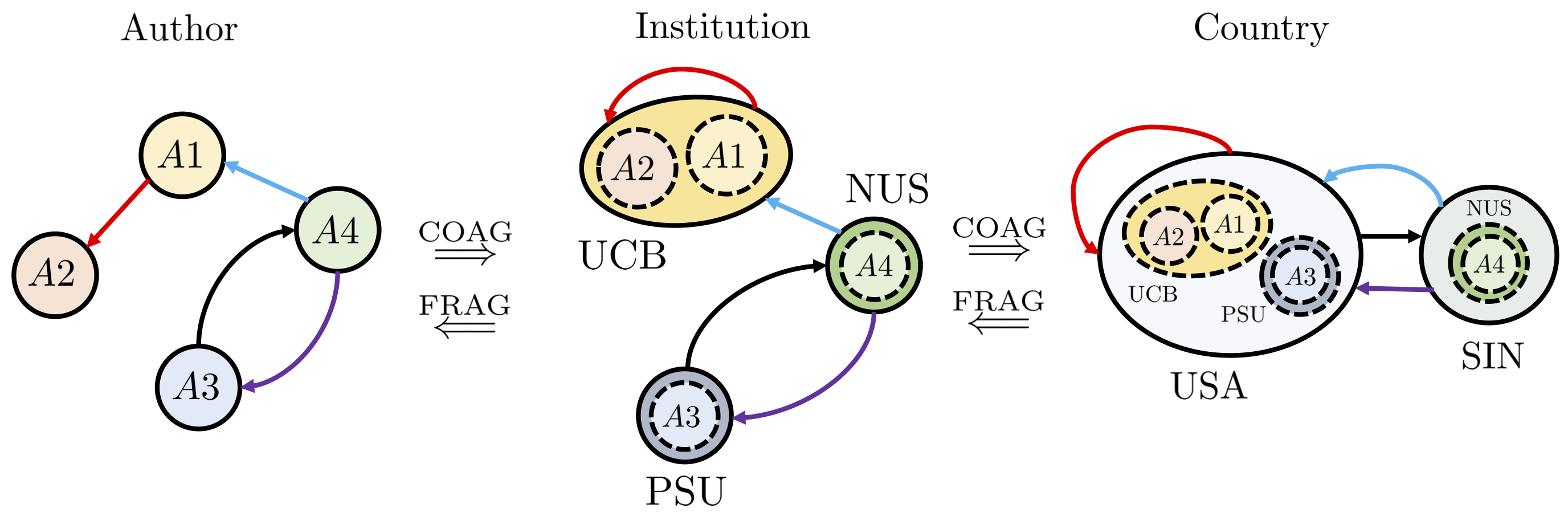}
    \caption{A visual network representation of levels of hierarchical granularity in the example from the text. 
    To go from left to right, vertices are merged or coagulated, while retaining all existing edges  (colored for clarity), and to go from right to left, the vertices are shattered or fragmented to create new vertices, with existing edges  distributed as per the example. In the institution and country level figures, the outer solid lines represent the vertex, and the internal dotted lines are for illustrative purposes only.}
    \label{fig:cfvert}
\end{figure}

The main purposes of this paper are to 1) introduce a Bayesian nonparametric family of probability distributions that model hierarchically granular associations, and which has important consistency properties that lead to a ``nesting'' within the family, 2) develop procedures for Bayesian inference of the parameters in the model family, and 3) apply these procedures to real data. A secondary purpose is to provide foundational perspective and tools for this kind of hierarchical data. The model family (see Definitions~\ref{def:icrpg} and~\ref{def:icrp}) builds off the edge-exchangeable ``Hollywood Model'' of \cite{Crane2018}, and the consistency properties stem from \emph{fragmentation} and \emph{coagulation} operations for set and interval partitions that we develop starting from those of \cite{Pitman1999b} and \cite{Dong2006}, the former of  which forms the basis of hierarchical Pitman-Yor processes. Our models have a similar flavor to the sequence memoizer introduced in \cite{Wood2009, Wood2011} for sequence data, and have the potential for analogous uses in association and network data.

In the network setting, our model family can be related to the 
 two parameter family of Caron-Fox (CF) random networks derived from \emph{generalized gamma} (GG) subordinators; see \cite[Section~6.3]{Caron2017}. 
This family, and the ``graphex'' generalizations of \cite{Veitch2015}, are popular statistical network models;
see \cite{Borgs2018}, \cite{Veitch2019}, \cite{Borgs2019b} and \cite{Borgs2021}. 
While the CF network derived from a gamma subordinator leads to a tractable model, it does not exhibit a power law degree distribution, which is a common feature of many real-world networks. However, our work shows the perhaps surprising result that appropriate fragmentation and coagulation operations applied to the CF gamma subordinator network model lead to a modified CF-GG network that behaves like a Pitman-Yor process, see \cite{Pitman1996} and \cite{Ishwaran2001}, which does exhibit power law behavior.
This allows us to establish an  explicit connection between the vertex-exchangeable CF-GG network model and the edge-exchangeable Hollywood model (Proposition~\ref{prop:cfpois1} and Theorem~\ref{thm:gcrp2cf}) through these operations. 
Furthermore, the fragmentation and coagulation operations create
a nesting of networks (see Definition~\ref{def:HICRP}), in parallel to Pitman-Yor and related processes where dual fragmentation and coagulation operations have been applied.
Our construction of the family 
relies on a
 generalization of the fragmentation/coagulation duality of \cite{Dong2006} for partitions, along with a new duality that combines that of \cite{Pitman1999b} and our generalization of \cite{Dong2006} (Theorem \ref{thm:pdgmmdual}), which is crucial for the inferential procedures
developed here. 
To our knowledge, this is the first instance where 
the duality of \cite{Dong2006} has been applied in Bayesian nonparametric settings.


The rest of the paper is organized as follows: To finish the introduction, we first briefly discuss some existing models that 
consider hierarchical structure in association or network data, all of which differ from our setting, 
and then introduce the model,  its motivations, and its connection to the CF-GG network.
 In Section~\ref{sec:genres} we develop a connection between integer partitions and association data, 
 and then state our new duality result,  which allows us to finally define our model family. In Section~\ref{sec:icrpinfproc}
 we develop inferential procedures for the model and in Section~\ref{sec:nums} apply them to synthetic data (as a check) and to a Wikipedia voting dataset.

\subsection{Related work}
Probably closest to our setting are the hierarchical models  introduced by  \cite{Clauset2008},  where  the 
vertices in the graph are also leaves in a binary tree. 
The latent hierarchical structure is 
encoded through genealogical closeness, 
and the interpretation of these hierarchies is not straightforward, depending 
in a complicated way on parameters of the model and the structure of the tree.  
 These models have been generalized to allow 
generic and random trees, and to the Bayesian setting; see \cite{Schmidt2013} and \cite{Peixoto2019}; and they are mainly used to infer latent community structure from a single network; see the introduction of \cite{Li2020} for a thorough discussion of this perspective. Our setting differs from these works in that we assume the hierarchical community structure respects the \emph{same} edges at each level, and each level can be viewed as a label category.  This added structure may decrease model flexibility, but increases interpretability.

There is a related but separate thread of research around ``multilayer'' networks, which gives a flexible modelling paradigm for a number of correlated networks. While multilayer models are flexible enough to encompass hierarchical structure as defined here, they are typically used as models for data of the form of a number of networks 
 with a number of shared nodes, e.g., a social network in a school observed over time.
 See \cite{Kivela2014} for a survey and discussion. For similar purposes, there is also the recent work \cite{Chandna2022}, which defines a very general class of models of correlated stochastic block models on the same set of vertices. 

The recent work of \cite{Dempsey2022} develops a hierarchical model for association data, also starting from the HM, but their notion of hierarchy is quite different from ours.

\subsection{Model definition, motivation, and connections}

We first formally define interaction data, roughly following \cite{Crane2018}.

\begin{definition}[Interaction set]
A finite (ordered) sequence from  a population with labels $\cL:=\{U_1,\ldots, U_k\}$,
is called an 
 \emph{interaction} of $\cL$. 
A multi-set $\{I_j\}_{j=1}^n$ of interactions of $\cL$ is called an interaction set of $\cL$.
\end{definition}

To digest this definition, consider the paper-corpus example from the introduction, but with the addition of two further papers that involve a fifth author $A5$:
\ben{\label{eq:ordint}
\cI:=\clc{(A1, A2),  (A3, A4), (A4,A3), (A4,A1), (A2,A5,A4), (A5, A2, A4, A3)}.
}
To consider this an interaction set, the labels are  $\cL:=\{A1, A2, A3, A4, A5\}$ (so $U_i = Ai$ and $k=5$)
and the interactions $\{I_j\}_{j=1}^6$ ($n=6$) are the elements of the set $\cI$, which are simply sequences of 
 elements from $\cL$.
Note that interactions are assumed to be ordered (in the example, the author-order of the papers is meaningful), and the interaction sets are unordered (in contrast to \cite{Crane2018} where ordered interaction sets are referred to as interaction processes), but our framework could easily be adapted to 
any choice of ordered/unordered at both levels. (The appropriate structure will depend on the data.) On a first reading of what follows,
it may be helpful to  consider the case where all interactions are pairs, and then the interaction set can be visualized as a multi-graph with vertices~$\cL$ and directed edges $\{I_j\}_{j=1}^n$, as described previously.

Our model extensively uses the \emph{Chinese Restaurant Process} (CRP); see \cite[Chapter~3]{Pitman2006}, which 
frequently appears in Bayesian nonparametric settings. 

\begin{definition}[CRP sequence]\label{def:crpseq}
Fix $\alpha\in[0,1)$,  $\theta\geq-\alpha$, or $\alpha=1$ and $\theta>-1$, and an atomless probability measure $F$ on $\IR$. 
Recursively define the sequence of random variables $(X_1,X_2,\ldots)$  as follows. 
Set $X_1\sim F$. For $n\geq1$, given $(X_1,\ldots, 
X_{n})$ consists of unique elements $\{U_1,\ldots, U_{K_n}\}$ and $N_n(i):=\babs{\{j : X_j = U_i\}}$, $X_{n+1}$ is a draw from the 
probability measure
\be{
\sum_{i=1}^{K_n} \frac{N_n(i) - \alpha}{n+\theta} \delta_{U_i} +   \frac{\theta+K_n\alpha }{n+\theta} F.
}
We write $(X_1,X_2,\ldots)\sim \CRP(\alpha,\theta; F)$ and $(X_1,\ldots, X_n) \sim \CRP_n(\alpha, \theta; F)$, and in the case that $ F=\mathrm{Uniform}(0,1)$, we write $ \CRP(\alpha,\theta)$ and $ \CRP_n(\alpha,\theta)$.
\end{definition}
\begin{remark}
It is helpful for later to note the two extreme cases
\ben{\label{eq:crpex}
(X_1,X_2,\ldots)=
\begin{cases}
(U_1, U_2, U_3, \ldots), & \alpha=1, \theta>-1, \\
(U_1, U_1, U_1, \ldots), & \alpha\in[0,1), \theta=-\alpha.
\end{cases}
} 
\end{remark}

With this definition, we can define in our language
a version of the \emph{Hollywood Model} (HM) from \cite[Section~4]{Crane2018}; see also \cite[Example~7.8]{Janson2018} and \cite[Section~4.2]{Bloem-Reddy2017}.
\begin{definition}[Hollywood Model]\label{def:HM}
Fix $(\alpha, \theta)$ in the allowable range of Definition~\ref{def:crpseq} and an atomless probability  measure $ F$ on $\IR$.
Let $(X_n)_{n\geq1}\sim\CRP(\alpha,\theta; F)$ be an infinite CRP sequence, independent of 
 $(L_i)_{i\geq1}$, which are i.i.d.\ with distribution $p:=(p_q)_{q\geq1}$ supported on $\IN$. Writing $\overline{ L}_j:=\sum_{i=1}^j L_i$, 
we say the sequence of interaction sets $(\cI_n)_{n\geq1}$  
is distributed as the Hollywood Model (HM) with parameters $(\alpha, \theta; F, p)$, denoted $(\cI_n)_{n\geq1}\sim\HM(\alpha,\theta; F, p)$ 
and $\cI_n \sim\HM_n(\alpha, \theta; F,p)$, if 
they have the joint representation 
\be{
\cI_n=\bclc{(X_{\overline{L}_{i-1}+1},\ldots, X_{\overline{L}_i})}_{i=1}^n.
}
We drop $ F$ from the notation when $ F=\mathrm{Uniform}(0,1)$.
\end{definition}

To explain the definition, the probability  distribution $p$ governs the i.i.d.\ lengths $L_1,L_2,\ldots$ of interactions, 
and $\cI_n$ consists of grouping the sequence $X_1, X_2,\ldots,$ into blocks of length $L_1,\ldots, L_n$.
For the network case $p_2=1$, each block is a pair and 
\be{
\cI_n = \{(X_i, X_{i+1})\}_{i=1}^n
}
is a random directed multi-graph with $n$ edges and vertices with labels $(U_1,\ldots, U_{K_{2n}})$, where $(U_i)_{i\geq1}$ are i.i.d.~$ F$ and $K_{2n}$ is as in Definition~\ref{def:crpseq}. 
The properties of the network associated to $\cI_n$ can be read from properties of the $\CRP$. For example, it is easily seen to be edge-exchangeable in the sense of \cite{Crane2018}; see also \cite{Cai2016} and \cite{Janson2018}. (It is also vertex exchangeable since the labels are i.i.d.~$ F$ (non-atomic), although this is somewhat artificial since it is the same as labelling the $n$ vertices according to  a uniform permutation of $[n]$, which always yields a vertex exchangeable graph.) 
Detailed results on the number of vertices and degree distribution 
are derived using this correspondence; see \cite[Theorems~4.2 and 4.3]{Crane2018}.

\begin{remark}
The i.i.d.\ vertex labeling by  an atomless $ F$ serves as a mathematical contrivance to give unique labels to each vertex, but is also useful to make distributional statements.
  In inference procedures, the labels given to each vertex will be 
human readable, e.g., considering the  example in the introduction, the labels
are the author name, institution, or country.
\end{remark}

With these ingredients, we can define our model, which is simply the HM with a random number of interactions.

\begin{definition}[CRP Interaction set]\label{def:icrpg}
Fix $(\alpha, \theta)$ in the allowable range of Definition~\ref{def:crpseq}, an atomless probability  measure $ F$ on $\IR$, a 
probability distribution $p:=(p_q)_{q\geq1}$ supported on $\IN$, and a random variable $N\in\IN$.
Letting 
 $(\cI_n)_{n\geq1}\sim\HM(\alpha,\theta; F, p)$ independent of $N$, we define the 
define the  $\CRP$ interaction set distribution with parameters $(\alpha, \theta;  F, p)$ and $N$
by 
\ben{\label{eq:ICRPdefg}
\ICRP_N\clr{\alpha, \theta;  F, p}:= \law(\cI_N).
}
We drop $ F$ from the notation when $ F=\mathrm{Uniform}(0,1)$.
\end{definition}

The definition only depends on $\law(N)$, but it is notationally simpler to 
parameterize by the random variable. Allowing  $N$ random increases the model flexibility, but 
is not so easy to infer. We highlight a special choice of $N$
that connects the $\ICRP$ to the CF-GG network model, which will be used in our inference procedures below.

\begin{definition}[ICRP special case]\label{def:icrp}
Retaining the notation from Definition~\ref{def:icrpg}, and letting $\nu:=(\nu_q)_{q\geq1}$ be
 a sequence of non-negative integers
with generating function $\phi_\nu(t):=\sum_{q\geq1} \nu_q t^q<\infty$ for all $t\geq0$,  $\clr{\gamma_s}_{s\geq0}$ be a standard gamma subordinator (an independent increments process with marginals $\gamma_s$ having density $x^{s-1} e^{-x} dx$ for $x>0$) independent of $\clr{N(s)}_{s\geq0}$, a rate one homogenous Poisson process on $\IR_+$. 
We define the special 
  $\CRP$ interaction set distribution with parameters $(\alpha, \theta;  F, \nu)$
by 
\ben{\label{eq:ICRPdef}
\SICRP\clr{\alpha, \theta;  F,\nu}:=\ICRP_{N\clr{\phi_\nu(\gamma_\theta)}}\bclr{\alpha, \theta;  F, \bclr{\nu_q \gamma_\theta^q /\phi_\nu(\gamma_\theta)}_{q\geq1}}.
}
We drop $ F$ from the notation when $ F=\mathrm{Uniform}(0,1)$.
\end{definition}

\begin{remark}
In contrast to the $\HM$ Definition~\ref{def:HM}, we allow $\nu$ to be a generic (finite) measure on $\IN$. 
On the right hand side of~\eq{eq:ICRPdef}, the distribution governing interaction length  
is invariant to scaling of $\nu$ (since $(\nu_q/\phi_\nu)_{q\geq1}$ is invariant to the scaling), while the number of interactions  $N\clr{\phi_\nu(\gamma_\theta)}$ is not. Thus we can control 
the total number of interactions without affecting the proportion of each type, which is important for model flexibility and inference. 
\end{remark}

To understand the motivation behind the $\SICRP$ definition, we consider the case $\nu= \lambda \delta_2$ ($\nu_2=\lambda>0$, and $\nu_i=0$ for $i\not=2$.), in which case
\ben{\label{eq:icrpgraphcf}
\{(X_i,X_{i+1})\}_{i=1}^{N(\lambda \gamma_\theta^2)}\sim \SICRP(\alpha, \theta,  F,\lambda \delta_2),
}
with $(X_i)_{i\geq1}\sim \CRP(\alpha, \theta; F)$. 
While this representation may not seem any less mysterious, it is in fact closely related to the Caron-Fox Generalized Gamma (CF-GG) network defined in
\cite[Section~6.3]{Caron2017}. 

\begin{definition}[Caron-Fox Generalized Gamma network]\label{def:cfgg}
For $(\sigma, \tau)\in [0,1) \times (0,\infty)$, denote the generalized gamma L\'evy density  by
\ben{\label{eq:gglm}
\rho_{\sigma, \tau}(y):= \frac{1}{\Gamma(1-\sigma)} y^{-1-\sigma} e^{-\tau y}, \,\, y>0,
}
and let $\Theta=\Theta_{\sigma,\tau}:= (U_i, W_i)_{i\in \IN} \subseteq \IR_+ \times \IR_+$ be the points of a 
Poisson point process with intensity $du \, \rho(y)  dy$.
Given $\Theta$, 
 let $(N_{ij})_{i,j\in\IN}$ be conditionally independent with $N_{ij}\sim\Po(W_i W_j)$. 
We define the family of networks $(G_{\sigma,\tau}(\theta))_{\theta \geq0}$ 
by the multi-edge sets
\be{
E(G_{\sigma,\tau}(\theta))=\cup_{i,j\in \IN: (U_i, U_j)\in[0,\theta]^2} \{(U_i , U_j)\}_{k=1}^{N_{ij}},
}
where we interpret $\{\cdot\cdot\}_{j=1}^0 = \emptyset$ so that $G_{\rho}(\theta)$ has no isolated vertices,
and the vertex set consists of the labels appearing in the edge set. 
We say the graph $G_{\sigma,\tau}(\theta )$ has the  \emph{Caron-Fox Generalized Gamma} distribution with parameters~$(\sigma,\tau)$ and $\theta$, denoted $G_{\sigma,\tau}(\theta)\sim \CF_{\sigma,\tau}(\theta)$.
\end{definition}
In words, to construct $G_{\sigma,\tau}(\theta)$, we start with vertices labelled $U_i\leq \theta$, and independently put $N_{ij}$ directed edges
from $U_i$ to $U_j$. If a vertex has no edges attached to it, then it is removed. 
Now, when $\sigma=0$, the weights $(W_i)_{i\geq1}$ can be understood as the jumps of a gamma subordinator; i.e., the process $\tau^{-1}\gamma_\theta:=\sum_{i} W_i\II[U_i\leq \theta ]$ which  has independent increments and marginal distributions $\Gam(\theta, \tau)$ with density proportional to $\tau^{\theta} y^{\theta-1} e^{-\tau y} dy$ for $y>0$.  Relatedly, in the case that $\alpha=0$, we can represent the $\CRP$ sequence as sampling from a probability distribution with masses equal to the normalized jumps of a gamma subordinator as described in, e.g., \cite[Section~4.2]{Pitman2006}, and from the beta-gamma algebra, we can represent these normalized jumps to time $\theta$ by  $W_i\II[U_i\leq \theta ]/\gamma_\theta$, which are jointly independent of $\gamma_\theta$. Using this representation in~\eq{eq:icrpgraphcf}, Poisson thinning implies the following connection between the CF-GG and $\SICRP$ models, which is also
implicitly shown in \cite[Section~6.3]{Caron2017}.

\begin{proposition}\label{prop:cfpois1}
For $\tau>0$ and $\theta>0$, we have
\be{
\SICRP(0,\theta; \mathrm{Uniform}(0,\theta), \tau^{-1} \delta_2) =  \CF_{0,\tau}(\theta).
}
\end{proposition}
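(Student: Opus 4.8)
The plan is to rewrite both sides of the identity in the common form ``given a Dirichlet random probability measure $\mu$ and an independent positive scalar $\Lambda$, draw $\Po(\Lambda)$ directed edges (loops allowed) whose two endpoints are i.i.d.\ from $\mu$'', and then match the ingredients one at a time. First I would unpack the left-hand side. With $\nu=\tau^{-1}\delta_2$ we have $\phi_\nu(t)=\tau^{-1}t^2$, so in Definition~\ref{def:icrp} the interaction-length law $(\nu_q\gamma_\theta^q/\phi_\nu(\gamma_\theta))_{q\geq1}$ degenerates to $\delta_2$ and the number of interactions is $N(\tau^{-1}\gamma_\theta^2)$. Feeding this into Definitions~\ref{def:icrpg} and~\ref{def:HM}, $\SICRP(0,\theta;\mathrm{Uniform}(0,\theta),\tau^{-1}\delta_2)$ is the law of a directed multigraph whose edge set consists of the $N(\tau^{-1}\gamma_\theta^2)$ length-$2$ blocks of a $\CRP(0,\theta;\mathrm{Uniform}(0,\theta))$ sequence, that sequence being independent of $\gamma_\theta$. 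Because $\alpha=0$, the standard representation of $\CRP(0,\theta)$ (see \cite[Section~4.2]{Pitman2006}) exhibits this sequence as, conditionally on a random measure $\mu$, i.i.d.\ from $\mu$, where $\mu$ places the normalized jumps of a gamma subordinator at i.i.d.\ $\mathrm{Uniform}(0,\theta)$ locations; by the beta--gamma algebra the masses of $\mu$ are independent of the total gamma mass. So the left-hand side is of the advertised form, with $\mu$ Dirichlet of concentration $\theta$ and base $\mathrm{Uniform}(0,\theta)$, and $\Lambda=\tau^{-1}\gamma_\theta^2$ where $\gamma_\theta\sim\Gam(\theta,1)$ is independent of $\mu$.

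Next I would unpack the right-hand side. In $\CF_{0,\tau}(\theta)$ only points with $U_i\leq\theta$ ever produce an edge, and when $\sigma=0$ the $W$-coordinates of those points are exactly the jumps of a gamma subordinator run to time $\theta$, with total $S:=\sum_{i:U_i\leq\theta}W_i\sim\Gam(\theta,\tau)$ and normalized jumps $P_i:=W_i/S$ independent of $S$; by the marking property of the Poisson process the matching locations $U_i$ are, conditionally, i.i.d.\ $\mathrm{Uniform}(0,\theta)$. Now Poissonise the array $(N_{ij})$: conditionally on $\Theta$, the total edge count $\sum_{i,j:\,U_i,U_j\leq\theta}N_{ij}$ is $\Po(S^2)$, and given that total the edges are i.i.d.\ with each edge equal to $(U_i,U_j)$ with probability $P_iP_j$, i.e.\ with its two endpoints drawn i.i.d.\ from $\mu:=\sum_iP_i\delta_{U_i}$. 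Since, as in the Hollywood model, only vertices appearing in some edge are recorded, there are no isolated vertices to reconcile, and the right-hand side is again of the advertised form with the same law for $\mu$ and with $\Lambda=S^2$, $S\sim\Gam(\theta,\tau)$ independent of $\mu$.

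Finally I would match the two descriptions. The law of $\mu$ is the same on both sides, being in each case the law of the normalized jumps of a gamma subordinator at time $\theta$, equivalently the directing (de Finetti) measure of $\CRP(0,\theta;\mathrm{Uniform}(0,\theta))$. For the scalar, $\Lambda=\tau^{-1}\gamma_\theta^2$ on the left and $\Lambda=S^2$ on the right, which agree under the rescaling relating the standard total $\gamma_\theta\sim\Gam(\theta,1)$ to the $\Gam(\theta,\tau)$ total $S$; and the independence $\mu\perp\Lambda$, needed to glue the two ingredients together in the same way on both sides, is precisely the beta--gamma independence of a gamma total from its normalized jumps. Assembling these facts gives equality of the two graph distributions. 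I expect the main obstacle to be exactly this last point: not any individual marginal, but checking that the randomness governing the number of edges is coupled to the randomness governing the vertex weights identically on the two sides --- which is where the $\alpha=0$ assumption enters, since only then is the relevant directing measure realisable from a single gamma subordinator, making the beta--gamma algebra available. A secondary, purely bookkeeping matter is reconciling conventions: directed edges with loops, the automatic absence of isolated vertices, and the harmless relabelling of the atomless base measure from $\mathrm{Uniform}(0,1)$ to $\mathrm{Uniform}(0,\theta)$.
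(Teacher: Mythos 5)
Your proposal is correct in approach and is essentially the paper's own argument, which is given in the paragraph preceding the proposition: represent the $\alpha=0$ CRP sequence as i.i.d.\ draws from the normalized jumps of a gamma subordinator (independent of the total mass by the beta--gamma algebra), identify that subordinator with the $\sigma=0$ weights of the CF construction, and use Poisson thinning/marking to pass between a $\Po(\Lambda)$ total with i.i.d.\ endpoints and the independent array $N_{ij}\sim\Po(W_iW_j)$. The one step you should not wave through is the scalar matching: with $S\sim\Gam(\theta,\tau)$ one has $S^2\eqd\tau^{-2}\gamma_\theta^2$, whereas the left-hand side uses $\phi_\nu(\gamma_\theta)=\tau^{-1}\gamma_\theta^2$, so "agree under the rescaling" hides a factor of $\tau$; this discrepancy is inherited from the statement itself (the argument as you and the paper run it actually yields $N_{ij}\sim\Po(\tau W_iW_j)$, i.e.\ the identity holds with $\nu=\tau^{-2}\delta_2$), so it is a constant to reconcile rather than a flaw in your method.
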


Proposition~\ref{prop:cfpois1} connects the $\SICRP$ with the CF-GG model, and shows it is a natural
candidate for a tractable model of network data when $\nu_i=0$ for $i\not=2$. As previously mentioned, the one parameter model does not fit real-world data well, but both the CF-GG and the $\SICRP$ have two parameter versions that provide better fits. To compare these models, we first note that the two parameter $\SICRP$ is extremely easy to simulate via $\gamma_\theta$, $N(\gamma_\theta^2)$ and $(X_i)_{i\geq1}$, and thus can be used 
in Bayesian inferential procedures (as we do  below).
In contrast, for $G_{\sigma,\tau}(\theta)\sim\CF_{\sigma, \tau}(\theta)$ with $\sigma>0$, the analog of the CRP urn rule, given by \cite[(32) and (37)]{Caron2017}, read from \cite{Pitman2003}, depends on the analog of $\gamma_\theta$, and both analogs are much more complicated to sample.
Theorem~\ref{thm:gcrp2cf} below  gives an explicit relation between the two parameter $\SICRP$ and CF-GG models, which also provides further evidence that the $\SICRP$ is more natural for modelling purposes than the CF-GG;  see Remark~\ref{rem:labswd}.

Turning now to higher order interactions, a natural analog of the CF-GG network model for interactions of length $q$, would have, given $\Theta$, the number of interactions of $(U_{i_1},\ldots, U_{i_q})$ be conditionally independent and distributed Poisson$(W_{i_1}\cdots W_{i_q})$, which corresponds in the $\ICRP$ notation to
\be{
\{(X_{(i-1)q+1},X_{(i-1)q+2},\ldots, X_{iq})\}_{i=1}^{ N( \nu_q\gamma_\theta^q)}\sim \SICRP(\alpha, \theta,  F,\nu_q \delta_q),
}
for some weights $\nu_q$. 
If $\cI \sim \SICRP(\alpha, \theta,  F, \nu)$, then Poisson thinning implies that the 
the collection $\cI_q$ of interactions of $\cI$ of length exactly $q$ is distributed 
as $\SICRP(\alpha, \theta,  F,\nu_q \delta_q)$. The $\cI_q$ are dependent, since the labels 
are derived from the same sampling scheme, which is a desirable property. 
Altogether, the $\ICRP$ is a natural generalization of the one parameter CF-GG model which is easy to simulate, and, as we show in the next section, has desirable 
consistency properties for inference of interaction data with hierarchical granularity.

\section{Partitions and interaction data}\label{sec:genres}

To define fragmentation and coagulation operations on 
interaction sets, we first develop a correspondence between
interaction sets and partitions with some additional structure. The correspondence then allows us to easily transfer the
well-developed concepts of coagulation and fragmentation for partitions to interaction sets.
We need some notation for partitions in line with \cite{Pitman2006}. Here and below we write $\abs{\cdot}$ to denote cardinality of a set.
A partition $\pi=\{A_1, A_2,\ldots\}=\{A_i\}_i$ is a collection of disjoint subsets of $\IN=:\{1,2,\ldots\}$ called \emph{blocks}. If $\cup_i A_i = B$, then we may say that $\pi$ is a partition of $B$. 
We follow the convention that blocks are listed in order of least element and $A_k=\emptyset$ for $k$ larger than $\abs{\pi}$, the number of blocks of $\pi$. Similarly, for a partition $\pi$, we may write $\{A_1, \ldots, A_{\abs{\pi}}\}$ for $\{A_1,\ldots, A_{\abs{\pi}},\emptyset,\ldots\}$.
For any sequence $(X_1,X_2,\ldots)$, we associate a partition $\samp(X_1,X_2,\ldots)$ of~$\IN$ induced from this sequence by the equivalence relation given by $i\stackrel{r}{\sim} j$ if $X_i=X_j$ (i.e., putting indices with 
the same value in the same block). If $(X_1,X_2,\ldots)\sim \CRP(\alpha, \theta)$, then we also write $\samp(X_1,X_2,\ldots)\sim \CRP(\alpha,\theta)$ and $\samp(X_1,X_2,\ldots, X_n)\sim \CRP_n(\alpha,\theta)$, which should not cause confusion.
In addition, we define a \emph{composition} of a positive integer $m$ to be a sequence of positive integers summing to $m$.

\begin{definition}[Partition and composition of an interaction]\label{def:partint}
 Let $\cI$ be an interaction set of $\{U_1,\ldots, U_k\}$ with $\abs{\cI}=n$ interactions. 
Let $(I_j)_{j=1}^n$ be a uniform random ordering of the interactions of $\cI$ and write $\ell_j:=\abs{I_j}$ and $m=\sum_{j=1}^n \ell_j$. Associate to $\cI$ with this ordering the sequence 
$(X_i)_{i=1}^m:=(I_1,\ldots, I_n)$
of the labels from $(I_j)_{j=1}^n$ (slightly abusing notation). 
We define the partition of $\cI$ to be 
the partition induced by this sequence, $\partit(\cI):=\samp(X_1,\ldots, X_m)$.
We also define the \emph{composition} of $\cI$ by $\compos(\cI):=(\ell_1,\ldots, \ell_n)$.
 \end{definition}

 \begin{remark}
If interactions are unordered, then Definition~\ref{def:partint} should be modified to have an additional randomization of the order of each interaction.
\end{remark}

In the example given  at~\eq{eq:ordint} above, assuming random order of the interactions is reverse of how they appear in~\eq{eq:ordint}, we have $k=5$,  $n=6$, $m=15$, and 
\be{
(X_i)_{i=1}^{15}=(A5, A2, A4, A3, A2,A5,A4,  A4,A1,  A4,A3, A3, A4,   A1, A2).
}
Thus 
\ben{\label{eq:partex}
\partit(\cI)=\bclc{\{1, 6\}, \{2, 5, 15\}, \{3, 7, 8, 10, 13\}, \{4, 11, 12\}, \{9, 14\}},
}
and $\compos(\cI) = (4,3, 2, 2, 2,2)$.
As described previously, in the
 case where $\cI$  has all interactions of length two, we can visualize $\cI$ 
as a multi-graph and generate $\partit(\cI)$ as follows. Labeling the edges of the graph by a random permutation of $[n]$ and the edge-ends of edge $j$ by $(2j-1, 2j)$, each block of  
 $\partit(\cI)$  corresponds to the  the edge-end labels in a vertex. In this case, $\compos(\cI)=(2,\ldots, 2)$. 
 
Definition~\ref{def:partint} is useful because once we have a partition structure associated to an object, we have available the many tools and deep theory of random partitions, e.g., see \cite{Pitman2006}. However, this will only be helpful if it is possible to get back to the original interaction set structure. It is clear that this is not entirely possible since the labels in the population have been lost. However, the partition tells us how to construct a sequence that mimics the sequence $(X_i)_{i=1}^m$ in the sense that the positions of the distinct labels align, and the
composition tells us how to choose the lengths of interactions. 
The next definition gives a construction achieving this partial inversion.

 \begin{definition}[Interaction of a partition with composition]\label{def:intpop}
Let $\pi=\{A_1, \ldots, A_k\}$ be a partition of $[m]$ and $\cC=(\ell_1,\ldots, \ell_n)$ be a composition of $m= \bar{\ell}_n:=\sum_{j=1}^n \ell_j$. Let $(U_i)_{i\geq1}$
be a sequence of i.i.d.~$\mathrm{Uniform}(0,1)$ random variables and define the 
sequence $X'=(X_1', \ldots, X_{m}')$ by the following rule: if $j \in A_{i}$, then $X_j'=U_i$. 
We define the uniform-labelled \emph{interaction} of $(\pi, \cC)$  to be the  (random) interaction 
$\inter(\pi,\cC)=\bclc{(X_1',\ldots, X_{\ell_1}'),\ldots, (X'_{m_{n-1}+1}, \ldots, X_{m}')}$, 
where $\bar{\ell}_j=\sum_{i=1}^j \ell_i$.
\end{definition}

The Uniform$(0,1)$ labels ensure each block of the partition receives a distinct label. Continuing with the example above, if $\pi$ is given by~\eq{eq:partex} and $\cC=(4,3,2,2,2)$ and $U_1,\ldots, U_5$ are the distinct labels, then 
\be{
(X_i')_{i=1}^{15}=(U_1, U_2, U_3, U_4, U_2,U_1,U_3, U_3,U_5,  U_3,U_4, U_4, U_3,   U_5, U_2),
}
and so 
\be{
\inter(\pi,\cC)= \bclc{(U_1, U_2, U_3, U_4), ( U_2,U_1,U_3), (U_3,U_5),  (U_3,U_4), (U_4, U_3),  ( U_5, U_2)}.
}
Substituting $U_1= A5, U_2=A2, U_3=A4, U_4=A3, U_5=A1$ recovers~\eq{eq:ordint}. 
The randomized labels are  useful for distributional statements, but are not used in the inference, since in practice, the labels will have meaning. 
We could also change Definition~\ref{def:partint} to have embellished partitions, where the blocks are labeled by their population label.
In this case, the $U_i$ in Definition~\ref{def:intpop} would be the
relevant population label rather than random, and then the two definitions would provide a bijection between interaction sets and  compositions with partitions that have distinctly labeled blocks. 
In the case of networks, the correspondence between partitions and graphs is closely related to that of 
\cite[Section~2]{Bloem-Reddy2017}, where they view a network as an 
\emph{ordered} collection of edges, with elements of the population labeled by order of appearance in the sequence (more generally, this corresponds to assuming both the interactions and interaction sets are ordered, which is the simplest setting).

We record the following easy proposition which 
shows the way that it is possible to recover an intersection set from its partition and composition. 

\begin{proposition}\label{prop:intinv}
If 
$\cI$ is an interaction set with 
i.i.d.~$\mathrm{Uniform}(0,1)$ population labels, then 
 \be{
\inter\bclr{\partit(\cI),\compos(\cI)}\eqd \cI.
}
\end{proposition}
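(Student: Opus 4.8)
The plan is to condition on the pair $(\partit(\cI),\compos(\cI))$ and to show that the conditional law of $\cI$ given this pair coincides with the conditional law of $\inter(\partit(\cI),\compos(\cI))$ given it; the identity in distribution then follows by averaging over $(\partit(\cI),\compos(\cI))$. The guiding heuristic is that $\partit$ and $\compos$ record everything about $\cI$ — which positions across the interactions share a common population label, and the list of interaction lengths — except the actual values of those labels; since by hypothesis the labels of $\cI$ are i.i.d.\ $\Unif(0,1)$, replacing them by the fresh i.i.d.\ $\Unif(0,1)$ labels used in Definition~\ref{def:intpop} cannot change the distribution. One can also phrase this via the embellished-partition bijection noted after Definition~\ref{def:intpop}, but the substantive content is the same.

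Concretely, I would think of $\cI$ as obtained from a combinatorial ``skeleton'' $\mathsf S$ (an interaction set over placeholder labels $\{1,\dots,K\}$, every placeholder occurring) by substituting placeholder $i\mapsto U_i$, where $(U_i)_{i\ge1}$ are i.i.d.\ $\Unif(0,1)$ independent of $\mathsf S$; by exchangeability of the $U_i$ the choice of substitution is immaterial. Fixing also the uniform random ordering of the interactions from Definition~\ref{def:partint} (independent of everything), write $W$ for the pair (skeleton, ordering). The concatenated sequence is then $X_j = U_{\mathsf s(j)}$, where $\mathsf s(j)\in[K]$ is the placeholder at position $j$, a function of $W$; since $F=\Unif(0,1)$ is atomless the $U_i$ are a.s.\ distinct, so $\partit(\cI)=\samp(X)$ is exactly the partition recording the fibres $\{j:\mathsf s(j)=\cdot\}$, and together with $\compos(\cI)$ it is a function of $W$ alone. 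Listing its blocks $B_1,\dots,B_K$ by least element, $X$ is constant on $B_i$ with value $v_i = U_{\tau(W)(i)}$, where $\tau(W)$ is the bijection $i\mapsto \mathsf s(\min B_i)$ of $[K]$.

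The crux is the claim that, conditionally on $(\partit(\cI),\compos(\cI))$, the vector $(v_1,\dots,v_{|\partit(\cI)|})$ is i.i.d.\ $\Unif(0,1)$: given $W$ the permutation $\tau(W)$ is fixed, so $(v_i)$ is a deterministic reordering of the i.i.d.\ $\Unif(0,1)$ variables $(U_i)$, which are independent of $W$; hence $(v_i)$ is i.i.d.\ $\Unif(0,1)$ with conditional law depending on $W$ only through $K=|\partit(\cI)|$, and averaging over the fibres gives the claim since $(\partit(\cI),\compos(\cI))$ is $W$-measurable and determines $K$. Now, read off Definition~\ref{def:intpop}, $\inter(\partit(\cI),\compos(\cI))$ is exactly the multiset obtained by grouping the sequence $j\mapsto(\text{a fresh i.i.d.\ }\Unif(0,1)\text{ label attached to the block of }\partit(\cI)\text{ containing }j)$ into consecutive blocks of sizes $\compos(\cI)$; this is precisely the description of $\cI$ in its random ordering, then passed to a multiset, with $(v_i)$ in place of those fresh labels. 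Since the two label vectors have the same conditional law given $(\partit(\cI),\compos(\cI))$, the two conditional laws agree, and integrating over $(\partit(\cI),\compos(\cI))$ completes the argument.

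The only point needing care — rather than any genuinely hard step — is this conditional i.i.d.\ statement for the block values, which hinges on independence of the i.i.d.\ labels from both the combinatorial skeleton and the random ordering, together with atomlessness of $F$ ensuring $\samp$ loses nothing beyond the label values. A minor bookkeeping caveat: if $\cI$ is allowed to carry population labels that appear in no interaction, the identity should be read after restricting $\cL$ to the labels that actually occur, in line with the paper's conventions.
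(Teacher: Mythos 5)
Your proof is correct. The paper offers no written proof of this proposition (it is recorded as an ``easy'' consequence of the definitions), and your argument --- conditioning on $(\partit(\cI),\compos(\cI))$, observing that these are measurable with respect to the combinatorial skeleton and ordering, and checking that the block values $(v_1,\ldots,v_{|\partit(\cI)|})$ are then conditionally i.i.d.\ $\mathrm{Uniform}(0,1)$ and hence interchangeable with the fresh labels used by $\inter$ --- is precisely the justification the authors intend, with the independence of labels from the skeleton and the atomlessness of $F$ correctly identified as the only points that need checking.
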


\subsection{Random interaction sets via random partitions}
The correspondence above allows us to lift constructions and properties of partitions with compositions to interactions. A first step in this direction is to define random interactions
through random partitions.  For exchangeable random partitions, this step has been taken in \cite{Crane2018}, albeit in slightly different language. 
Here we focus on partition sequences $(\Pi_n)_{n\geq1}$ generated according to the CRP defined in Definition~\ref{def:crpseq}. 
If $(X_1,X_2,\ldots)\sim \CRP(\alpha,\theta)$, then $\Pi_n \sim \samp(X_1,\ldots, X_n)$, jointly in $n$. 
Translating the rule for generating $(X_1,X_2,\ldots)$ of Definition~\ref{def:crpseq} to the sequence $(\Pi_n)_{n\geq1}$ gives the following well-known generative description of the $\CRP$ partition. Deterministically
$\Pi_1=\{1\}$, and given $\Pi_n=\{A_1,\ldots, A_{K_n}\}$,
$\Pi_{n+1}$ is generated by either adding $(n+1)$ to block $A_i$ with probability $(\abs{A_i}-\alpha)/(n+\theta)$, or   forming a singleton block of of $(n+1)$ with probability $(\alpha K_n + \theta)/(n+\theta)$. 
Note that $\alpha=1$ is deterministically equal to the partition of all singletons, and $-\theta=\alpha<1$  is deterministically equal to the partition with one block. 
We write $\Pi_n\sim\CRP_n\clr{\alpha,\theta}$ and $\Pi=(\Pi_n)_{n\geq1}\sim \CRP(\alpha,\theta)$.
With this definition, we have the following easy proposition that gives  a definition of the $\ICRP_N$ in terms of the $\CRP$ \emph{partition}.

\begin{proposition}\label{prop:part2ICRP}
Recalling the objects in the $\ICRP_N$ Definition~\ref{def:icrpg}:
 $(\alpha, \theta)$ is in the allowable range of Definition~\ref{def:crpseq};
$p:=(p_q)_{q\geq1}$ is a probability distribution, and $N\in\IN$ is a random variable.  
Letting $(L_i)_{i\geq1}$ be i.i.d.\ with distribution $p$ independent of $(\Pi_n)_{n\geq1}\sim \CRP(\alpha,\theta)$, and setting $\overline{L}_N:= \sum_{i=1}^N L_i$, we have 
\be{
\inter\bclr{\Pi_{\overline{L}_N}, (L_1,\ldots, L_N)} \sim \ICRP_N\clr{\alpha, \theta; p}.
}
\end{proposition}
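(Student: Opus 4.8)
The plan is to peel off the randomness one layer at a time and reduce everything to the standard two-stage description of a $\CRP$ sequence: first generate its partition, then attach an i.i.d.\ label to each block. I first condition on $N=n$ and on $(L_1,\ldots,L_n)=(\ell_1,\ldots,\ell_n)$, writing $m:=\overline{\ell}_n=\sum_{i=1}^n\ell_i$; here I use (as in Definition~\ref{def:icrpg}) that $N$ is independent of $(\Pi_n)_{n\geq1}$ and of $(L_i)_{i\geq1}$, and that $(\Pi_n)_{n\geq1}$ is independent of $(L_i)_{i\geq1}$. On the left-hand side, Definition~\ref{def:intpop} then builds $\inter\bclr{\Pi_m,(\ell_1,\ldots,\ell_n)}$ by drawing $(U_i)_{i\geq1}$ i.i.d.\ $\Unif(0,1)$ independently of $\Pi_m\sim\CRP_m(\alpha,\theta)$, setting $X_j':=U_i$ whenever $j$ lies in the $i$-th block $A_i$ of $\Pi_m$ (blocks listed by least element), and then cutting $(X_1',\ldots,X_m')$ into the consecutive runs of lengths $\ell_1,\ldots,\ell_n$. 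On the right-hand side, unwinding Definitions~\ref{def:icrpg} and~\ref{def:HM} (with $F=\Unif(0,1)$) shows that, under the same conditioning, $\cI_N$ is the interaction set obtained by cutting the first $m$ terms of an infinite $\CRP(\alpha,\theta;\Unif(0,1))$ sequence $(X_i)_{i\geq1}$ into the very same consecutive runs. Since ``cut into runs of prescribed lengths and forget the order of the runs'' is a fixed measurable map, it suffices to prove the sequence-level identity $(X_1',\ldots,X_m')\eqd(X_1,\ldots,X_m)$.

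The heart of the matter is therefore the claim that the two-stage construction reproduces the one-stage one, i.e.\ $(X_1',\ldots,X_m')\sim\CRP_m(\alpha,\theta;\Unif(0,1))$. This is precisely the reversal of the translation, recorded in the paragraph preceding the proposition, from the sequential sampling rule of Definition~\ref{def:crpseq} to the generative description of the $\CRP$ partition, and I would prove it by induction on $m$. Note first that, since the $U_i$ are a.s.\ distinct, $\samp(X_1',\ldots,X_n')=\Pi_n$; hence the distinct values of $(X_1',\ldots,X_n')$ in order of first appearance are $U_1,\ldots,U_{K_n}$ with $K_n=\abs{\Pi_n}$ --- because the blocks of $\Pi_n$ are listed by least element, which is exactly their order of appearance along the sequence --- and the multiplicity of $U_i$ among $X_1',\ldots,X_n'$ equals $\abs{A_i}$. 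Consequently the conditional law of $X_{n+1}'$ induced by the $\CRP$ partition transition $\Pi_n\mapsto\Pi_{n+1}$ (namely $X_{n+1}'=U_i$ with probability $(\abs{A_i}-\alpha)/(n+\theta)$, or $X_{n+1}'$ a fresh $\Unif(0,1)$ draw with probability $(\theta+\alpha K_n)/(n+\theta)$) matches the urn rule of Definition~\ref{def:crpseq} term by term, completing the induction. The degenerate cases $\alpha=1$ and $\theta=-\alpha$ are immediate from~\eqref{eq:crpex}.

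It remains to undo the conditioning. The two constructions in the first paragraph show that the conditional law of $\inter\bclr{\Pi_{\overline{L}_N},(L_1,\ldots,L_N)}$ given $\bclr{N,(L_i)_{i\geq1}}$ agrees with that of $\cI_N$; since the conditioning variables have the same joint distribution on both sides (the same random $N$, the same i.i.d.-$p$ sequence $(L_i)_{i\geq1}$, and $N$ independent of $(L_i)_{i\geq1}$ in both cases), the two unconditional laws coincide, which is the assertion. The only genuinely delicate point is the bookkeeping in the second paragraph --- aligning ``$i$-th block by least element'' in the definition of $\inter$ with ``$i$-th distinct value in order of appearance'' in the $\CRP$ sequence --- together with the routine check that the random index $\overline{L}_N$ and the random $N$ cause no measurability or independence problems. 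One could instead route the argument through Proposition~\ref{prop:intinv} applied to $\cI_n$, but then one must verify that the uniform re-ordering of interactions built into $\partit$ is harmless because the length sequence is i.i.d.; the direct argument above sidesteps this.
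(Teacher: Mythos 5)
Your proof is correct and follows exactly the route the paper intends: the paper states this as an ``easy proposition'' without a written proof, relying on the correspondence (described in the paragraph preceding the statement) between the sequential urn rule of Definition~\ref{def:crpseq} and the generative $\CRP$ partition with i.i.d.\ atomless labels attached to blocks in order of least element, which is precisely the sequence-level identity you verify by induction before integrating out $N$ and the lengths. The bookkeeping points you flag (blocks by least element versus distinct values by order of appearance, and the harmlessness of the random indices) are the right ones, and your handling of them is sound.
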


\subsection{Fragmentation and coagulation for partitions}\label{sec:fragcoagpart}

As discussed in the introduction, we are interested in 
hierarchical interaction data, meaning we may refine the interaction set by adding information 
to the population labels, and coarsen it by
combining labels according to common traits. Since labels of an interaction set 
correspond to blocks of its partition, we can refine it by \emph{fragmenting} blocks of the partition, and coarsen it by \emph{coagulating} or merging sets of blocks. 
Using the correspondence between partitions 
and interaction sets developed in the previous section,
these operations correspond to merging and shattering labels in the interaction sets, while respecting
the interaction set structure; see  Figure~\ref{fig:cfvert} and discussion after Definition~\ref{def:intabfragcoag} below.

While there are many fragmentation and coagulation operations, e.g., see \cite{Bertoin2006}, we 
are interested in those that are closed with respect to the two parameter $\ICRP_N$ family, which, at the level of partitions, are those that are closed with respect to the $\CRP$. 
Our starting point are the operations defined in  \cite{Pitman1999b} and \cite{Dong2006}, which we 
refer to as Pitman  and DGM (respectively) fragmentation and coagulation. These operations are ``dual'' in the sense that, 
roughly speaking, for $0\leq\beta< \alpha\leq 1$ and $\theta>-\beta$, an appropriate Pitman coagulation turns a $\CRP(\alpha,\theta)$ partition  into a $\CRP(\beta, \theta)$ one, and a Pitman fragmentation  can take it back; see
 \cite[Theorem~12]{Pitman1999b} and also our forthcoming Theorem~\ref{thm:pdgmmdual} for details.
 For the other parameter, if $0\leq \alpha\leq 1$ and $\theta>-\alpha$, then DGM fragmentation
 turns a $\CRP(\alpha, \theta)$ into a $\CRP(\alpha, \theta+1)$, and a DGM coagulation takes it back; 
 see \cite[Theorem~3.1]{Dong2006} and Theorem~\ref{thm:pdgmmdual}. 
 Composing operations alters both parameters simultaneously, but it  is computationally and conceptually advantageous to combine
the sequential operation into one, which is one of our main contributions that  we now describe.

\medskip
\noindent{\bf Combined Pitman/DGM Fragmentation-Coagulation Duality.}
We first describe the general operations and then show duality.
For a set $A\subseteq N$ and partition $\pi$ of $\IN$, we 
write $\pi|_A$ for the partition of $A$ induced by restricting $\pi$ to $A$.

\begin{definition}\label{def:PDGMcoagfrag}
Let $0\leq  \beta\leq \alpha< 1$, $\theta\geq-\alpha$, and $\emm\in \IZ_{\geq0}$.

\noindent \emph{(PDGM $\emm$-fragmentation)}
Let $\pi=\{A_1,\ldots, A_k\}$ be a partition of $[n]$. 
We recursively define a sequence of random variables $J_1,\ldots, J_\emm$ taking values in $\{0,1,\ldots, k\}$ depending on $\pi$ according to $\CRP(\beta,\theta)$ probabilities started from $\pi$, meaning that for $1\leq \ell\leq \emm$, 
\be{
\IP(J_\ell = j | J_{1},\ldots, J_{\ell-1} ) = \frac{ \abs{A_j} + \abs{\clc{ 1\leq i< \ell: J_i = j}}-\beta}{n+\theta +\ell-1}, \,\,\, j=1,\ldots, k,
}
and
\be{
\IP(J_\ell =0 | J_{1},\ldots, J_{\ell-1} ) = \frac{\theta + k\beta+ \abs{\clc{ 1\leq i< \ell: J_i = 0}}}{n+\theta + \ell-1}.
} 
Given $(J_i)_{i=1}^\emm$, for $j=1,\ldots, k$, write $N_j = \abs{\clc{ 1\leq i\leq \emm: J_i = j}}$, and  let $\Pi_j\sim \CRP(\alpha,N_j-\beta)$ be conditionally independent partitions of $\IN$.
The $(\beta\to\alpha,\theta)$-PDGM $\emm$-fragmentation of~$\pi$ is the random partition of $[n]$ defined by
\be{
\fragd{\emm}_{\beta \to\alpha,\theta}(\pi):=\cup_{j=1}^k \{ \Pi_j |_{A_j} \}.
}

\noindent\emph{(PDGM $\emm$-coagulation)} 
Let $\{C_1, \ldots, C_{K_\emm}\} \sim \CRP_\emm(\beta, \theta)$ be a partition of $[\emm]$, and given $ \clc{C_1, \ldots, C_{K_\emm}}$, 
let
\ben{\label{eq:sdir}
S=(S_0, S_1, S_2, \ldots, S_{K_\emm})\sim \Dir\bbbclr{\frac{\theta + K_\emm\beta }{\alpha} ,\frac{|C_1|-\beta}{\alpha}, \ldots, \frac{|C_{K_\emm}|-\beta}{\alpha}}.
}
Given the above, let $M_1,M_2, \ldots$ conditionally i.i.d.\ taking values in $\{0,1,\ldots, K_\emm\}$ with 
\be{
 \IP(M_i=j)=S_j, \,\,\, j=0,1,\ldots, K_\emm.
}
Finally, let $\{B_1, B_2,\ldots\}\sim \CRP\bclr{\frac{\beta}{\alpha}, \frac{\theta+K_\emm \beta}{\alpha}}$ independent of the above and denote $\cB_0:= \{1\leq i\leq k: M_i=0\}$.
The $(\alpha\to\beta,\theta)$-PDGM $\emm$-coagulation of   $\pi:=\{A_1,\ldots, A_k\}$, a partition of $[n]$, is the random partition of $[n]$ defined by 
\be{
\coagd{\emm}_{\alpha\to\beta, \theta}(\pi):= \bclc{ \cup_{i:M_i =j} A_i}_{j=1}^{K_\emm}  \cup 
  \bclc{\cup_{i\in B_j\cap \cB_0}A_i }_{j\geq1}.
}
\end{definition}

In words, for the fragmentation, for each $j\in[k]$,  $A_j$ is conditionally independently fragmented according to $\CRP(\alpha, N_j-\beta)$.
For the coagulation operation, each block $A_j$ receives a conditionally independent label $M_j\in\{0,\ldots,K_\emm\}$, and blocks with common labels in $[K_m]$ are combined, and the blocks receiving the label zero are coagulated according to a $ \CRP\bclr{\frac{\beta}{\alpha}, \frac{\theta+K_\emm \beta}{\alpha}}$ applied to their indices. 

\begin{remark}[Relation to Pitman and DGM operations]\label{rem:relpdgmop}
If $\emm=0$, then in the PDGM fragmentation, we have $N_j=0$, and each block is fragmented by $\CRP(\alpha, -\beta)$. In the coagulation,  we have all $M_i=0$, and so all blocks are coagulated according to a $ \CRP\bclr{\frac{\beta}{\alpha}, \frac{\theta}{\alpha}}$ applied to their indices. This matches the operations described in \cite[Section~2.3]{Pitman1999b}.

If $\alpha=\beta$ and $\emm=1$, then the coagulation operation can be described by sampling an independent $P_0\sim\mathrm{Beta}((\theta+\alpha)/\alpha, (1-\alpha)/\alpha))$ variable, and then conditionally independently sampling a Bernoulli$(1-P_0)$ variable for each block,  merging those that receive a one, and doing nothing to the others;  here we use that $\{B_1,B_2,\ldots\}\sim \CRP(1,\frac{\theta}{\alpha}+1)$ consists of the partition of all singletons, from~\eq{eq:crpex}. For the same parameter range ($\alpha=\beta$ and $\emm=1$), the fragmentation operation selects a single block of $\pi$ in terms of the relevant CRP probability, and then fragments it by a $\CRP(\alpha,1-\alpha)$, and leaves the rest alone; noting that 
by~\eq{eq:crpex}, if $\Pi \sim \CRP(\alpha, -\alpha)$, then $\Pi|_A=A$. 
The analogous operation in \cite[Section~3]{Dong2006} is  defined on the masses of the latent Poisson-Dirichlet distribution
in the $\CRP$ construction, rather than on the blocks of the finite partition $\CRP$ partition. 
In the case of the fragmentation on the $\PD$ masses, the construction requires sampling a block randomly chosen according to mass, and fragmenting by PD$(\alpha, 1-\alpha)$, which depends only on the parameter 
$\alpha$. For the $\CRP$, the probability of choosing 
the latent $\PD$ mass corresponding to block $A_j$ is given by the chance 
that the next step in the $\CRP$ chooses that block. This occurs according to the $\CRP$ probabilities at the next step, governed by the random variable $J_1$. 
There is also a possibility that none of the masses of existing blocks are chosen, which corresponds to starting a new table  at the next $\CRP$ step, which we encode by $J_1=0$. Thus even for $\emm=1$ the operation has an interesting twist, and for $\emm>1$ it is new. 
\end{remark}

We have the following new duality result that generalizes 
\cite[Theorem~12]{Pitman1999b} and \cite[Theorem~3.1]{Dong2006}.

\begin{theorem}\label{thm:pdgmmdual}
Let $0\leq \beta\leq \alpha< 1$, $\theta\geq -\beta$, and $\emm\in\IZ_{\geq0}$. 
The following are equivalent:

\smallskip
\begin{enumerate}[nosep, label=(\roman*)]
\item\label{item:pdgmm1} $\Pi \sim\CRP_n(\beta,\theta)$ and $\Pi' \eqd \fragd{\emm}_{\beta\to\alpha, \theta}\clr{\Pi}$,
\item\label{item:pdgmm2} $\Pi'\sim\CRP_n(\alpha, \theta+m)$ and $\Pi\eqd \coagd{\emm}_{\alpha\to\beta,\theta}\clr{\Pi'}$.
\end{enumerate}
\end{theorem}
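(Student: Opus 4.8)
The plan is to reduce the combined statement to the two known dualities---Pitman's (\cite[Theorem~12]{Pitman1999b}, the $\emm=0$ case with $\beta<\alpha$) and DGM's (\cite[Theorem~3.1]{Dong2006}, the $\alpha=\beta$ case with $\emm$ arbitrary)---by exhibiting the PDGM $\emm$-fragmentation as a composition of $\emm$ successive DGM fragmentations followed by a single Pitman fragmentation, and dually for coagulation. Concretely, starting from $\Pi\sim\CRP_n(\beta,\theta)$, applying DGM fragmentation $\emm$ times in succession produces (by iterating \cite[Theorem~3.1]{Dong2006}) a partition distributed as $\CRP_n(\beta,\theta+\emm)$; then a single Pitman $(\beta\to\alpha,\theta+\emm)$-fragmentation turns it into $\CRP_n(\alpha,\theta+\emm)$ by \cite[Theorem~12]{Pitman1999b}. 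The main work is to check that the \emph{law of the composite operation} applied to a fixed partition $\pi$ coincides with the one-shot operation $\fragd{\emm}_{\beta\to\alpha,\theta}(\pi)$ described in Definition~\ref{def:PDGMcoagfrag}---i.e., that the bookkeeping random variables $J_1,\dots,J_\emm$ (with the $\CRP(\beta,\theta)$ urn dynamics, including the ``$J_\ell=0$'' escape probability) correctly record, for each original block $A_j$, how many of the $\emm$ sequential DGM-fragmentation steps ``land in'' $A_j$, and that conditionally on $N_j=\#\{i:J_i=j\}$ the accumulated within-block refinement is exactly $\CRP(\alpha,N_j-\beta)$ restricted to $A_j$. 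The escape outcome $J_\ell=0$ corresponds to a DGM step that attaches to a table not among the original blocks, hence leaves every $A_j$ untouched at that step; this is why the $J_\ell=0$ probability carries the mass $(\theta+k\beta+\cdots)/(n+\theta+\ell-1)$ and why those steps simply do not contribute to any $N_j$.

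To organize this cleanly I would prove a single \emph{composition lemma}: for the DGM fragmentation, one step applied to $\CRP(\alpha,\psi)$-restricted-to-$A_j$ data followed by another is again, after marginalizing the intermediate allocation, a ``$\CRP(\alpha,\cdot)$-fragmentation indexed by an updated count.'' Equivalently, one shows that the exchangeable partition probability function (EPPF) of the $\emm$-fold composite, written as a sum over the allocation patterns $(J_1,\dots,J_\emm)$, telescopes to the Definition~\ref{def:PDGMcoagfrag} description. Because all objects here are finite partitions governed by explicit CRP urn weights, this is a finite (if slightly intricate) computation with Pochhammer symbols; the key algebraic identity is the standard one that a $\CRP(\alpha,\psi)$ partition ``grown by $N$ more customers into an existing-structure'' has the same law as a $\CRP(\alpha,\psi)$ partition of the enlarged index set conditioned appropriately, which is exactly the content needed to merge $N_j$ separate single-step fragmentations of $A_j$ into one $\CRP(\alpha,N_j-\beta)$ fragmentation.

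For the coagulation direction I would run the mirror argument: the PDGM $\emm$-coagulation should be a single Pitman $(\alpha\to\beta,\theta+\emm)$-coagulation (accounting for the $\CRP(\beta/\alpha,(\theta+K_\emm\beta)/\alpha)$ merging of the label-zero blocks and the Dirichlet split $S$, which is precisely the Pitman coagulator at parameters $(\alpha,\theta+\emm)\to(\beta,\theta+\emm)$) followed by $\emm$ successive DGM coagulations (each undoing one DGM fragmentation, lowering the second parameter by one, via \cite[Theorem~3.1]{Dong2006}); the partition $\{C_1,\dots,C_{K_\emm}\}\sim\CRP_\emm(\beta,\theta)$ of $[\emm]$ is the latent object recording the merged structure of those $\emm$ inverse steps. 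Given the two composition lemmas (fragmentation and coagulation), the theorem follows formally: \ref{item:pdgmm1}$\Rightarrow$\ref{item:pdgmm2} by applying the known dualities step by step to peel $\Pi'$ back to $\Pi$, and \ref{item:pdgmm2}$\Rightarrow$\ref{item:pdgmm1} symmetrically; alternatively, one direction plus the fact that $\coagd{\emm}_{\alpha\to\beta,\theta}$ is a genuine Markov kernel inverting $\fragd{\emm}_{\beta\to\alpha,\theta}$ on the relevant laws gives the other. The main obstacle is the first composition lemma: correctly matching the sequential urn description (with its escape state $J_\ell=0$) to the one-shot Definition~\ref{def:PDGMcoagfrag}, and in particular verifying that the intermediate allocations can be marginalized out without disturbing the conditional $\CRP(\alpha,N_j-\beta)$ law inside each block---this is where the DGM identity must be applied with care to the parameter shifts $N_j-\beta$ rather than to a fixed base measure.
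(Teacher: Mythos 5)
Your strategy for the fragmentation direction is essentially the paper's: the authors also factor $\fragd{\emm}_{\beta\to\alpha,\theta}$ as $\fragd{0}_{\beta\to\alpha,\theta+\emm}\circ\fragd{\emm}_{\beta\to\beta,\theta}$, prove the $\alpha=\beta$, general-$\emm$ case by iterating \cite[Theorem~3.1]{Dong2006} (they do this at the level of Poisson--Dirichlet masses via Kingman's paintbox rather than by an EPPF computation, but your finite Pochhammer calculation is a plausible alternative), and then use the $\emm=0$ Pitman duality to collapse the within-block fragmentation $\CRP(\beta,N_j-\beta)$ followed by $\CRP(\alpha,-\beta)$ into a single $\CRP(\alpha,N_j-\beta)$. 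The bookkeeping via $J_1,\dots,J_\emm$, including the escape state $J_\ell=0$, is exactly as you describe.

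The genuine gap is in the coagulation direction. You assert that the one-shot kernel $\coagd{\emm}_{\alpha\to\beta,\theta}$ of Definition~\ref{def:PDGMcoagfrag} factors as a Pitman $(\alpha\to\beta,\theta+\emm)$-coagulation followed by $\emm$ DGM coagulations, but you give no argument, and the identification in your parenthetical is off: the Pitman coagulator at $(\alpha,\theta+\emm)\to(\beta,\theta+\emm)$ merges \emph{all} block indices by a single $\CRP(\beta/\alpha,(\theta+\emm)/\alpha)$, whereas the one-shot kernel involves the latent partition $\{C_j\}\sim\CRP_\emm(\beta,\theta)$, a Dirichlet split $S$ with $K_\emm+1$ coordinates, and a $\CRP(\beta/\alpha,(\theta+K_\emm\beta)/\alpha)$ acting only on the label-zero blocks --- genuinely different structure, and matching it to the composed kernel is the hard part of the theorem. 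Your fallback (``one direction plus the fact that $\coagd{\emm}$ inverts $\fragd{\emm}$'') is circular: reverse kernels are unique, so the content of (ii) is precisely that the specific kernel written in the definition \emph{is} that reverse kernel, which must be verified. The paper does not attempt your factorization; it proves the coagulation direction directly, representing $\PD(\alpha,\theta+\emm)$ via the subordinator construction of \cite[Proposition~21]{Pitman1997a}, realizing $S$ and the labels $M_i$ through jump locations of a gamma subordinator, and invoking \cite[Corollary~20]{Pitman1996} together with the $\emm=0$ duality --- the authors explicitly flag this as a significant elaboration of the Dong--Goldschmidt--Martin proof, a sign that it does not drop out formally from composing known duals. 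You would need either to supply such a direct argument or to actually prove the coagulation composition identity you are assuming.
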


The  case where $\emm=0$ is exactly \cite[Theorem~12]{Pitman1999b},
and the case $\emm=1$ and $\alpha=\beta$ is an integer partition version of 
the interval partition result of \cite[Theorem~3.1]{Dong2006}. 
The general result can be understood from results in
\cite{James2010,James2013,James2015} and
\cite{Ho2021}, 
but we include a self-contained proof from a slightly different perspective in Section~\ref{sec:PDGMcoagfragpf} below.
At a high level, the proof of $(i)\implies(ii)$ essentially follows 
by showing that appropriate compositions of these simpler cases results in the fragmentation operation of 
Definition~\ref{def:PDGMcoagfrag}. The proof of $(ii)\implies(i)$ outside of the known cases 
relies on the subordinator description of the two-parameter Poisson-Dirichlet distribution given in \cite[Proposition~21]{Pitman1997a}, along with a size-bias description of the same given in
 \cite[Theorem~12]{Pitman1999b}, which 
 is a significant elaboration of the proof of  \cite[Theorem~3.1]{Dong2006}.
 See Section~\ref{sec:PDGMcoagfragpf} for the details.

\subsection{Hierarchically granular interaction sets via partitions}\label{sec:fragcoagint}
Using the correspondence between partitions and interaction sets of Definition~\ref{def:intpop} and Proposition~\ref{prop:intinv}
allows us to lift the coagulation and fragmentation concepts and results of Section~\ref{sec:fragcoagpart}
to interaction sets.

\begin{definition}\label{def:intabfragcoag}
Let $0\leq  \beta\leq \alpha< 1$, $\theta\geq-\alpha$, and $\emm\in \IZ_{\geq0}$.

\noindent\emph{(i)}
For an interaction set $\cI$, define
$(\beta\to\alpha,\theta)$-PDGM $\emm$-fragmentation of $\cI$ by
\be{
\ifragd{\emm}_{\beta \to\alpha,\theta}(\cI)=\inter\bclr{\fragd{\emm}_{\beta \to\alpha,\theta}\clr{\partit(\cI)},\compos(\cI)}.
}

\noindent\emph{(ii)}
For an interaction set $\cI$, define
$(\alpha\to\beta,\theta)$-PDGM $\emm$-coagulation of $\cI$ by
\be{
\icoagd{\emm}_{\alpha\to\beta, \theta}(\cI):=\inter\bclr{\coagd{\emm}_{\alpha\to\beta, \theta}\clr{\partit(\cI)},\compos(\cI)}.
}
\end{definition}

The notation is heavy, but the 
operations are natural: Considering the 
directed multi-network case where all interactions of $\cI$ have length $2$. 
As discussed just after Definition~\ref{def:partint}, 
after labeling edges $(2j-1, 2j)$,
 $\partit(\cI)$ has blocks 
that are the labels of edge-ends in a vertex. 
Then the fragmentation shatters vertices by 
appropriate $\CRP$ partitions, and each block 
represents a new vertex containing the edge ends of elements of the block, while respecting the edge structure
of the graph. 
Similarly,
to coagulate a graph, we
merge vertices according to appropriate $\CRP$ partitions. See Figure~\ref{fig:cfvert} for a visual representation.
The notion of shattering and merging labels applies in the general case, but it is difficult to visualize ``directed'' (ordered) hypergraphs with loops.

Using the correspondence between $\CRP$ integer partitions and $\ICRP$ interaction sets 
shown in Proposition~\ref{prop:part2ICRP}, we have the following 
fragmentation-coagulation duality lifted from the 
analogous duality of Theorem~\ref{thm:pdgmmdual} for partitions. 

\begin{theorem}\label{thm:intabdual}
Let $0\leq \beta\leq \alpha< 1$, $\theta\geq -\beta$,  $\emm\in\IZ_{\geq0}$, $N\in\IN$ a random variable
and $p=(p_i)_{i\geq1}$ a probability distribution on $\IN$. 
The following are equivalent:

\smallskip
\begin{enumerate}[nosep, label=(\roman*)]
\item\label{item:ipdgmm1} $\cI \sim\ICRP_N(\beta,\theta; p)$ and $\cI' \eqd \ifragd{\emm}_{\beta\to\alpha, \theta}\clr{\cI}$,
\item\label{item:ipdgmm2} $\cI'\sim\ICRP_N(\alpha, \theta+\emm;p)$ and $\cI\eqd \icoagd{\emm}_{\alpha\to\beta,\theta}\clr{\cI'}$.
\end{enumerate}
\end{theorem}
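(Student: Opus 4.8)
The plan is to lift the partition-level duality of Theorem~\ref{thm:pdgmmdual} to interaction sets through the correspondence of Definitions~\ref{def:partint} and~\ref{def:intpop}. The operations $\ifragd{\emm}$ and $\icoagd{\emm}$ are defined in Definition~\ref{def:intabfragcoag} precisely so as to apply $\fragd{\emm}$, resp.\ $\coagd{\emm}$, to the partition coordinate $\partit(\cI)$ while transporting the composition coordinate $\compos(\cI)$ unchanged, and $\ICRP_N$ is itself built from $\CRP$ partitions through $\inter$ by Proposition~\ref{prop:part2ICRP}; so the statement should follow by transport of structure. Since $\fragd{\emm}$ and $\coagd{\emm}$ are dual, I would prove only \ref{item:ipdgmm1}$\Rightarrow$\ref{item:ipdgmm2}, the reverse being identical after interchanging $(\fragd{\emm},\beta,\theta)$ with $(\coagd{\emm},\alpha,\theta+\emm)$.

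The first step would be an intertwining fact: if $\pi$ is an exchangeable random partition of $[m]$ and $\cC$ a composition of $m$ with i.i.d.\ parts (as produced in Proposition~\ref{prop:part2ICRP}), independent of $\pi$ and of the auxiliary randomness used by $\fragd{\emm}$, then
\be{
\ifragd{\emm}_{\beta \to\alpha,\theta}\bclr{\inter(\pi,\cC)}\ \eqd\ \inter\bclr{\fragd{\emm}_{\beta \to\alpha,\theta}(\pi),\,\cC},
}
and likewise with $\icoagd{\emm}$ and $\coagd{\emm}$ in place of $\ifragd{\emm}$ and $\fragd{\emm}$; moreover this holds with $\inter(\pi,\cC)$ kept as a common coordinate on both sides, which is what delivers the joint form of the claim. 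This should follow from Proposition~\ref{prop:intinv} together with the observation that the uniform reordering of interactions hidden inside $\partit$ (Definition~\ref{def:partint}) acts on the index set $[m]$ as a permutation of the consecutive runs cut out by $\cC$: this permutation leaves the law of $\pi$ invariant by exchangeability of the $\CRP$ partition, the accompanying reordering of the i.i.d.\ parts of $\cC$ is harmless, and $\fragd{\emm}$ and $\coagd{\emm}$ are equivariant in law under relabelling of $[m]$ (again by exchangeability of the constituent $\CRP$ partitions). The $\inter$ on the right uses fresh independent labels, exactly as in Definition~\ref{def:intabfragcoag}.

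Granting the intertwining fact, I would conclude by conditioning and then mixing. Assuming~\ref{item:ipdgmm1}, use Proposition~\ref{prop:part2ICRP} to realize $\cI=\inter\bclr{\Pi_{\overline{L}_N},(L_1,\ldots,L_N)}$ with $\Pi=(\Pi_n)_{n\geq1}\sim\CRP(\beta,\theta)$ independent of $N$ and of the lengths $(L_i)_{i\geq1}\iidsim p$. Condition on $N$ and $(L_1,\ldots,L_N)$, so that $m:=\overline{L}_N$ is fixed and $\Pi_m\sim\CRP_m(\beta,\theta)$. Writing $\cC=(L_1,\ldots,L_N)$, the intertwining fact identifies the joint law of $\bclr{\cI,\ifragd{\emm}_{\beta \to\alpha,\theta}(\cI)}$ with that of $\bclr{\inter(\Pi_m,\cC),\,\inter(\fragd{\emm}_{\beta \to\alpha,\theta}(\Pi_m),\cC)}$, and the equivalence \ref{item:pdgmm1}$\Leftrightarrow$\ref{item:pdgmm2} of Theorem~\ref{thm:pdgmmdual}, applied at level $n=m$, replaces in law the partition pair $\bclr{\Pi_m,\,\fragd{\emm}_{\beta \to\alpha,\theta}(\Pi_m)}$ by $\bclr{\coagd{\emm}_{\alpha\to\beta,\theta}(\Pi^{\star}_m),\,\Pi^{\star}_m}$ with $\Pi^{\star}_m\sim\CRP_m(\alpha,\theta+\emm)$; crucially this conditional law depends on $N$ and $(L_1,\ldots,L_N)$ only through $m$. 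Unconditioning, and using the $\icoagd{\emm}$ version of the intertwining fact and Proposition~\ref{prop:part2ICRP} once more, yields that $\ifragd{\emm}_{\beta \to\alpha,\theta}(\cI)\sim\ICRP_N(\alpha,\theta+\emm;p)$ and $\bclr{\cI,\ifragd{\emm}_{\beta \to\alpha,\theta}(\cI)}\eqd\bclr{\icoagd{\emm}_{\alpha\to\beta,\theta}(\cI'),\cI'}$, which is~\ref{item:ipdgmm2}.

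I expect the one genuinely delicate point to be the bookkeeping behind the intertwining fact, not any substantive estimate: one must check that the round trip $\inter\circ(\partit,\compos)$ returns $(\pi,\cC)$ in distribution, jointly and compatibly with the interaction set fed in; that the uniform reordering inside $\partit$ really is a permutation action of $S_m$ on $\pi$, so that exchangeability of the $\CRP$ partition applies and $\fragd{\emm}$, $\coagd{\emm}$ commute with it in law; that $\compos(\cI)$ passes through both $\ifragd{\emm}$ and $\icoagd{\emm}$ untouched and independently of the partition-level auxiliary randomness; and that all of these independences survive the conditioning on $N$ and $(L_1,\ldots,L_N)$. With that in hand, the remainder is a direct appeal to Theorem~\ref{thm:pdgmmdual} and Proposition~\ref{prop:part2ICRP}.
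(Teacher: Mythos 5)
Your proposal is correct and follows essentially the same route the paper intends: the paper states Theorem~\ref{thm:intabdual} as lifted directly from Theorem~\ref{thm:pdgmmdual} via Proposition~\ref{prop:part2ICRP} and the $\partit$/$\inter$ correspondence, offering no further proof, and your argument (condition on $N$ and the lengths, apply the partition duality at level $m=\overline{L}_N$, and transport through $\inter$ using exchangeability to absorb the uniform reordering) is exactly the fleshing-out of that one-line reduction. The intertwining bookkeeping you flag is the only content beyond what the paper records, and you handle it correctly.
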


An important aspect of Theorem~\ref{thm:intabdual} is that
creates a ``nesting'' of interaction sets: If $0\leq \alpha_1 \leq \alpha_2< 1$ and $\emm\in\IN$, then
we can generate 
$\cI_1\sim\ICRP_N\clr{\alpha_1,\theta; p}$ from $\cI_2\clr{\alpha_2,\theta+\emm}\sim\ICRP_N\clr{\alpha_2,\theta+\emm;p}$,
by $\icoagd{\emm}_{\alpha_2\to\alpha_1,\theta}$, and this yields a joint distribution. We can also understand the joint distribution going the other way via fragmentation.
Thus, as discussed in the introduction, 
we can \emph{jointly} model an interaction set at $d$ levels (say) of increasing fineness, so that at the $j$th level the marginal distribution is $\ICRP_N(\alpha_j, \theta+\emm_j;p)$ with $\alpha_1\leq \alpha_2\leq \cdots\leq \alpha_d$ and $0=:\emm_1\leq \emm_2\leq \emm_3\leq \cdots \leq \emm_{d}$ with $\emm_i\in\IZ_{\geq0}$, as described in detail in the next definition.

\begin{definition}\label{def:HICRP}
Let $0\leq \alpha_1\leq \alpha_{2}\leq \cdots \leq \alpha_d<1$, $\theta >-\alpha_1$, $0=:\emm_1\leq \emm_2\leq \cdots \leq  \emm_{d}$ with $\emm_i\in\IZ_{\geq0}$ for $i=2,\ldots,d$, $N\in\IN$ a random variable, and 
$p=(p_i)_{i\geq1}$ a probability distribution supported on $\IN$.
We say the vector of interaction sets $(\cI_1,\ldots, \cI_d)$ is distributed as the \emph{Hierarchical ICRP} (HICRP) with parameters $(\alpha_i)_{i=1}^d, \theta, (\emm_i)_{i=2}^d, p$ and $N$, denoted $(\cI_1,\ldots, \cI_d)\sim\HICRP_N\bclr{(\alpha_i)_{i=1}^d, \theta, (\emm_i)_{i=2}^d; p}$, if $\cI_1\sim \ICRP_N(\alpha_1, \theta; p)$ and for $i=2,\ldots, d$,
we have
\be{
\law\bclr{\cI_i \, |\, \cI_{i-1},\ldots, \cI_{1} }=\law\bclr{ \ifragd{\emm_i-\emm_{i-1}}_{\alpha_{i-1} \to \alpha_{i},\theta+\emm_{i-1}}\clr{\cI_{i-1}}}.
}
Or, what is the same according to Theorem~\ref{thm:intabdual}, $\cI_d\sim \ICRP_N(\alpha_d, \theta+\emm_d;p)$ and for $i=1,\dots, d-1$
we have
\be{
\law\bclr{\cI_i \, |\, \cI_{i+1},\ldots, \cI_{d} }=\law\bclr{\icoagd{\emm_{i+1}-\emm_{i}}_{\alpha_{i+1}\to \alpha_{i}, \theta+\emm_{i+1}}\clr{\cI_{i+1}}}.
}
\end{definition}

Definition~\ref{def:HICRP} can be used as a model for hierarchically granular data of the kind described in the introduction. As a proof of concept, in the next section we describe a Bayesian inference procedure for the case $d=2$, and where $N$ is chosen so that the top $d=2$ level  is distributed   $\SICRP$.

\section{\mathwrap{Posterior inference for the $\HICRP$}}\label{sec:icrpinfproc}

We develop posterior inference for the $\HICRP_N\bclr{(\beta, \alpha), \theta, \emm; p}$,
where, matching the $\SICRP$ and using the notation of its Definition~\ref{def:icrp}, we set $p_q=\nu_q \gamma_{\theta+\emm}^q/\phi_\nu(\gamma_{\theta+\emm})$,
and $N= N(\phi_\nu(\gamma_{\theta+\emm}))$.
We first discuss inference for the marginal $\SICRP$.

\subsection{\mathwrap{Posterior inference for the $\SICRP$}}
Let $\calI_n$ be an observed interaction set with interactions $(I_i)_{i=1}^n$ with length $\ell_i := |I_i|$.
Let $Q \subseteq \bbN$ be the set of interaction lengths appearing in $\calI_n$, and $(n_q)_{q \in Q}$ be the number of interactions with length $q$, leading to $\sum_{q\in Q} n_q = n$ and $\sum_{q\in Q} qn_q = \sum_{i=1}^n \ell_i = \bar{\ell}_n$. Our goal is to fit $\mathrm{SICRP}(\alpha, \theta, \nu)$ to the data $\calI_n$. 
The generative process of $\calI_n$ from the SICRP model  from Definition~\ref{def:icrp} can be rewritten as below.
\[
&\gamma \sim \gammadist(\theta, 1), \\
& N \give \gamma = t \sim \mathrm{Poisson}(\phi_\nu(t)), \\
& L_1, \dots, L_n \give N=n, \gamma=t \iidsim \mathrm{Cat}\bbbclr{ \bbbclr{\frac{\nu_q t^q}{\phi_\nu(t)} }_{q \in Q} }, \\
&X_1, \dots, X_{\bar{\ell}_n} \give N=n, \bar{L}_n=\bar{\ell}_n \sim \CRP_{\bar{\ell}_n}(\alpha, \theta), \\
&I_i \give (\bar L_i)_{i=1}^n = (\bar\ell_i)_{i=1}^n := (X_{\bar{\ell}_{i-1}+1}, \dots, X_{\bar{\ell}_i}) \text{ for } i=1,\dots, n.
\]
Note that Proposition~\ref{prop:intinv} implies that it is enough to work with the joint likelihoods 
of $\partit(\cI_n)$ and $\compos(\cI_n)=(\ell_i)_{i=1}^n$, so let $(A_i)_{i=1}^{k} = \partit(\cI_n)$ be the partition induced by $\cI_n$. Each block $A_i$ corresponds to a label  from the population, and the number of interactions with the label corresponding to $i$ is the size of the block $|A_i|$.
Because the $\partit(\cI_n)$ is generating according to the exchangeable CPR, the joint density representing the generative process can be written as
\[
\lefteqn{f_\text{SICRP}(t, n, (\ell_j)_{j=1}^n, (A_i)_{i=1}^{k} \give \theta, \alpha, (\nu_q)_{q\in Q})} \\
&:= \frac{t^{\theta-1}e^{-t-\phi_\nu(t)}}{\Gamma(\theta)n!} \prod_{q\in Q} (\nu_q t^q)^{n_q} \times
\frac{\alpha^{k} \Gamma(\theta/\alpha+k)\Gamma(\theta)}{\Gamma(\theta/\alpha)\Gamma(\theta+n)}
\prod_{i=1}^{k} \frac{\Gamma(|A_i|-\alpha)}{\Gamma(1-\alpha)} \\
&\propto
\frac{t^{\theta-1}e^{-t-\phi_\nu(t)}}{n! \Gamma(\theta+n)} \prod_{q\in Q} (\nu_q t^q)^{n_q}
\times \frac{\alpha^{k}\Gamma(\theta/\alpha+k)}{\Gamma(\theta/\alpha)} \prod_{i=1}^{k} \frac{\Gamma(|A_i|-\alpha)}{\Gamma(1-\alpha)}.
\]

For posterior inference, we assume the prior densities for $(\alpha, \theta)$ as follows,
\[
&f_0(\alpha) = \mathrm{logit}\,\calN( \alpha \give 0, 1) \propto \frac{1}{\alpha(1-\alpha)} 
\exp\left( - \frac{(\log \frac{\alpha}{1-\alpha})^2}{2} \right), \\
&f_0(\theta) = \log\,\calN(\theta\give 0, 1) \propto \frac{1}{\theta}
\exp\left( - \frac{(\log \theta)^2}{2} \right).
\]
We find estimating $(\nu_q)_{q\in Q}$ along with $(t, \alpha, \theta)$ results in instabilities in inference. Instead we choose 
the prior for $\nu_q$ as,
\[
\bbP(\dee\nu_q\give \gamma=t) = \delta_{n_q/t^q}(\dee\nu_q) \text{ for } q \in Q,
\]
so that the identity $n_q = \nu_q t^q$ is always satisfied. The resulting target posterior density is written as,
\[
\pi_\text{SICRP}(t, \alpha, \theta, (\nu_q)_{q\in Q}) &\propto
\frac{t^{\theta - 1} e^{-t} }{n!\Gamma(\theta+n)}\prod_{q\in Q} \1{\nu_q=n_q/t^q} f_0(\alpha) f_0(\theta) \\
& \times \frac{\alpha^{k}\Gamma(\theta/\alpha+k)}{\Gamma(\theta/\alpha)}  \prod_{i=1}^{k} \frac{\Gamma(|A_i|-\alpha)}{\Gamma(1-\alpha)}.
\]
We can simulate this distribution via Metropolis-Hastings, where $t$ is 
drawn from $\gammadist(\theta, 1)$ and $(\alpha, \theta)$ are sampled with random-walk proposals.

\subsection{\mathwrap{Posterior Inference with PDGM $\emm$-Coag HICRP}}
Now assume we have two interaction sets, an original interaction set $\calI_n = \{ I_i \}_{i=1}^n$ and a coagulated interaction set $\calI'_n = \{ I_i'\}_{i=1}^n$. We want to fit the two-level HICRP model given at Definition~\ref{def:HICRP} to $(\calI_n, \calI_n')$. The model is given by the following generative process:
\[
\calI_N \sim \mathrm{SICRP}(\alpha, \theta+\emm, \nu), \quad
\calI'_N \give \calI_N \sim \mathrm{ICOAG}_{\alpha\to\beta}^{(\emm)}(\calI_N).
\]
We will see this situation has an additional challenge not present in the SICRP case. 
First recall the generative description of the PDGM $\emm$-coagulation of a given partition $\{A_i\}_{i=1}^k$, with slight changes in the notation,
\begin{equation}
    \begin{aligned}
        &\{C_1,\dots, C_{K_\emm}\} \sim \mathrm{CRP}_\emm(\beta, \theta), \\
& M_1, \dots, M_k \sim \mathrm{DirCat}\left( \frac{\theta + K_\emm\beta}{\alpha}, \frac{|C_1|-\beta}{\alpha}, \dots, \frac{|C_{K_\emm}|-\beta}{\alpha} \right), \\
&\mathcal{B}_j := \{ 1 \leq i \leq k : M_i = j \} \text{ for } j=0, 1, \dots, K_\emm, \\
&\{B_1, \dots, B_{K_{|\mathcal{B}_0|}}\} \give \mathcal{B}_0 \sim \mathrm{CRP}\left(\frac{\beta}{\alpha}, \frac{\theta + K_\emm\beta}{\alpha} \right)- {\mathcal{B}_0};
    \end{aligned}
\end{equation}
here $\mathrm{DirCat}$ denotes the Dirichlet-Categorical distribution and $\CRP(\alpha,\theta)-B$ denotes $\CRP(\alpha,\theta)$ applied to the elements of a set $B$. The coagulated partition is computed as
\[
\{A'_\iota\}_{\iota=1}^{K'} = \{ \cup_{i:M_i=j} A_i \}_{j=1}^{K_\emm} \cup \{ \cup_{i \in B_t} A_i \}_{t=1}^{K_{|\calB_0|}}.
\]
The joint probability of this process is computed as follows,
\besn{\label{eq:fcoaglhood}
\lefteqn{f_\text{COAG}(\{C_j\}_{j=1}^{K_\emm}, \{\calB_j\}_{j=1}^{K_\emm}, 
\{B_t\}_{t=1}^{K_{|\calB_0|}}\give \{A_i\}_{i=1}^k, \alpha, \beta, \theta, \emm)} \\
&= f_\text{CRP}(\{|C_j|\}_{j=1}^{K_\emm}\give \beta, \theta) 
f_\text{CRP}\left(\{|B_t|\}_{t=1}^{K_{|\calB_0|}}
\,\Big|\, \frac{\beta}{\alpha}, \frac{\theta + K_\emm\beta}{\alpha}
\right) \\
& \times 
\frac{\Gamma(\frac{\theta+\emm}{\alpha})}{\Gamma(\frac{\theta+\emm}{\alpha}+k)}
\frac{\Gamma(\frac{\theta+K_\emm\beta}{\alpha}+|\calB_0|)}{\Gamma(\frac{\theta+K_\emm\beta}{\alpha})}
\prod_{j=1}^{K_\emm}\frac{\Gamma(\frac{|C_j|-\beta}{\alpha} + |\calB_j|)}{\Gamma(\frac{|C_j|-\beta}{\alpha})},
}
where $f_\text{CRP}(\cdot\give \alpha, \theta)$ denotes the EPPF of $\CRP(\alpha, \theta)$. While this joint probability is easy to compute, we do not see all these variables in the observed interaction sets $(\cI_n, \cI'_n)$, and so they must be marginalized out, which presents computational difficulties. 
In particular, writing $K' = K_\emm + K_{|\calB_0|}$, we observe the collection
\[
\bbclr{\cup_{j=1}^{K_\emm}\{\cB_j\}} \cup\bbclr{\cup_{t=1}^{K_{\abs{\cB_0}}} \{B_t\}}=:\{\wt B_\iota\}_{\iota=1}^{K'}
\]
through the parent-children relationship between $\cI_n$ and $\cI_n'$, but not the partition $\{C_j\}_{j=1}^{K_\emm}$, $\abs{\cB_0}$, or whether or not a given block $\wt B_\iota$ originates from $\{\cB_j\}_{j=1}^{K_\emm}$ or not, and 
this information appears in the likelihood~\eq{eq:fcoaglhood}. 
Working with the observed $\cB_\iota$, we encode some additional information using the (unobserved) variables
\[
 Z_\iota = \left\{ \begin{array}{ll}
    1 & \text{ if } \tB_\iota \in \{ \calB_j \}_{j=1}^{K_\emm}, \vspace{0.2cm}\\
    0 & \text{ if } \tB_\iota \in \{ B_t \}_{t=1}^{K_{|\calB_0|}}.
\end{array}
\right.
\]
Ideally, we would compute the \emph{marginal} probability of the coagulation,
\[
\lefteqn{f_\text{MargCOAG}(\{\tB_\iota\}_{\iota=1}^{k'} \give \{A_i \}_{i=1}^k, \alpha, \beta, \theta, \emm)}\\
&= \sum_{C, Z} f_\text{COAG}(\{C_j\}_{j=1}^{K_\emm}, \{ \tB_\iota, Z_\iota \}_{\iota=1}^{k'} \give \{A_i\}_{i=1}^k, \alpha, \beta, \theta, \emm),
\]
but this requires  summing a the computationally prohibitive number. Instead, we introduce an unbiased estimator of $f_\text{MargCOAG}$ using a proposal distribution $q$, 
\[
\lefteqn{f_\text{MargCOAG}(\{\tB_\iota\}_{\iota=1}^{k'}\give \{A_i \}_{i=1}^k, \alpha, \beta, \theta, \emm)}\\
&=\mathbb{E}_{q}\left[ 
\frac{f_\text{COAG}(\{C_j\}_{j=1}^{K_\emm}, \{ \tB_\iota, Z_\iota \}_{\iota=1}^{k'} \give \{A_i\}_{i=1}^k, \alpha, \beta, \theta, \emm)}{q(\{C_j\}_{j=1}^{K_\emm}, \{Z_\iota\}_{\iota=1}^{k'})}\right] \\
&\approx 
\frac{1}{S}\sum_{s=1}^S \frac{f_\text{COAG}(\{C_j^{(s)}\}_{j=1}^{K^{(s)}_\emm}, \{ \tB_\iota, Z^{(s)}_\iota \}_{\iota=1}^{k'} \give \{A_i\}_{i=1}^k, \alpha, \beta, \theta, \emm)}{q(\{C^{(s)}_j\}_{j=1}^{K^{(s)}_\emm}, \{Z^{(s)}_\iota\}_{\iota=1}^{k'})},
\]
where
\[
\left\{\{C_j^{(s)}\}_{j=1}^{K_\emm^{(s)}}, \{Z_\iota^{(s)}\}_{\iota=1}^{k'} \right\}_{s=1}^S \iidsim
q.
\]
The proposal $q$ should be informative, so that the resulting estimator has a low variance. 
We design $q$ based on the PDGM $\emm$-fragmentation, the dual process of PDGM $\emm$-coagulation. Specifically, given $  \{A'_\iota \}_{\iota =1}^{k'} := \partit(\cI'_n) $, we generate the lost information as follows.
\begin{enumerate}
    \item For $1 \leq \ell \leq \emm$, draw $J_\ell \in \{ 0, 1, \dots, k'\}$ with probability,
    \[
    &\IP(J_\ell = \iota \give J_1, \dots, J_{\ell-1})
    = \frac{|A'_\iota| + |\{1 \leq \ell' < \ell : J_{\ell'}=\iota\}|-\beta}{\bar\ell_n + \theta + \ell-1}, \\
    &\IP(J_\ell = 0 \give J_1, \dots, J_{\ell-1})
    = \frac{\theta + k'\beta + |\{1 \leq \ell' < \ell : J_{\ell'}=0\}|-\beta}{\bar\ell_n + \theta + \ell-1}. \\
    \]
    \item Let $N_\iota := \{ 1 \leq \ell \leq \emm : J_\ell = \iota \}$ for $\iota = 0, 1, \dots, k'$. Then for $1 \leq \iota \leq k'$, compute
    \[
    Z_\iota = \mathbb{I}[N_\iota > 0], \quad \{C_1, \dots, C_{K_\emm}\} = \{ N_\iota : 1 \leq \iota \leq k', N_\iota > 0 \},
    \]
    with $\mathbb{I}[\cdot]$ denoting the indicator function.
\end{enumerate}
The probability of generating a proposal is simply computed as,
\[
q(\{C_j\}_{j=1}^{K_\emm}, \{Z_\iota\}_{\iota=1}^{k'}) = 
\frac{\Gamma(\theta + \bar\ell_n)}{\Gamma(\theta + \bar\ell_n + \emm)} \frac{\Gamma(\theta + k'\beta + |N_0|)}{\Gamma(\theta + k'\beta)}\prod_{\iota=1}^{k'} \frac{\Gamma(|A_\iota'| - \beta + |N_\iota|)}{\Gamma(|A_\iota'|-\beta)}.
\]
We estimate the posterior distribution of the parameters $(\alpha, \beta, \theta, \emm)$ with this unbiased estimator via the pseudo-marginal Metropolis-Hastings algorithm, see \cite{Andrieu2009}.
Considering the constraints, we reparameterize as follows:
\[
&(\alpha, \beta, \theta, \emm) \text{ where } 0 < \beta < \alpha < 1, \theta > 0, m \in \bbN\\
& \Rightarrow (\alpha, \zeta, \vartheta, \kappa) \text{ where } 
\beta = \zeta \alpha, \theta = \vartheta\kappa, \emm = \mathrm{ceil}(\vartheta(1-\kappa)) \\
& \quad\quad\text{ with } 0 < \zeta, \kappa < 1, \vartheta > 0.
\]
For $\alpha, \zeta, \kappa$, we place logit-normal priors $\mathrm{logit}\,\calN(0, 1)$, and for $\vartheta$, we place log-normal prior $\log\,\calN(0, 1)$.

\begin{remark}
We have developed an inference procedure for the case where there are two levels of granularity ($d=2$ in Definition~\ref{def:HICRP}). If there are more than two levels of granularity ($d>2$), then we can apply our two level procedure sequentially, by conditioning on lower levels to infer the parameters of the next. 
\end{remark}

\section{Numerical Study}\label{sec:nums}
In this section, we check our inference algorithm correctly 
recovers parameters in simulated data, and then apply it to Wikipedia voting data.

\subsection{Confirmation with synthetic interaction sets}
We generate synthetic data from the following PDGM Coag ICRP model:
\[
\cI_N \sim \mathrm{SICRP}(\alpha, \theta + \emm, \nu), \quad \cI'_N \sim \mathrm{ICOAG}_{\alpha\to\beta}^{(\emm)}(\calI_N),
\]
where
\ba{
&\alpha = 0.7, \,\, \beta = 0.2,\,\,\theta = 100, \,\, M = 10, \\
&Q = \{1, 2, 3, 7, 100, 1000\}, \\ 
&\widetilde{n}_1 = 500, \,\,\widetilde{n}_2 = 7000, \,\,\widetilde{n}_3 = 1000, \,\,\widetilde{n}_7 = 15, \widetilde{n}_{100} = 0.5, \,\,\widetilde{n}_{1000} = 1.
}
Here, $\widetilde{n}_q := \nu_q t^q$ is the expected number of interactions with length $q$. In other words, rather than directly specifying $\nu_q$, we implicitly choose it by specifying the expected number of interactions $\widetilde{n}_q$. The generated $\cI_N$ has 5,703 population labels and 8,418 interactions, and the coagulated $\cI'_N$ has 896 population labels and 8,476 interactions (note that the number of interactions does not change after the coagulation). Given $(\cI_N, \cI'_N)$, we run three independent MCMC chains, each comprising 30,000 steps. Posterior samples were collected after discarding the initial 15,000 burn-in samples, with a thinning interval of 10. To check the mixing of the MCMC chains, we report the~$\hat{R}$ diagnostics as defined in~\cite{Vehtari2021}; here $\hat{R}$ values  close to one indicate good mixing. Also, as a predictive check, we generate predictive interaction sets using the collected posterior samples of $(\alpha, \beta, \theta, \emm, (\nu_q)_{q\in Q})$ and compare the statistics of those predictive interaction sets to the observed interaction sets. The statistics we consider are ``degree'' distributions (the proportions of interactions participating in $k$ interactions, for $k\in\IN$), number of population labels in the the original interaction set, number of population labels in the coagulated interaction set, and the number of interactions. For the degree distribution, we compute the Kolmogorov-Sminorov (KS) statistic $D_\text{KS} := \sup_x | F_\text{obs}(x) - F_\text{pred}(x)| $, where $F_\text{obs}(x)$ is the CDF of the observed degree distribution and $F_\text{pred}(x)$ is the CDF of the predictive degree distribution. For the number of population labels and interactions, we report the Rooted Mean Squared Error (RMSE). Figure~\ref{fig:synthetic-joint} summarizes the results. The top row shows the coverage of the empirical distributions of the posterior samples around ground-truth values. The bottom row shows the degree distributions, number of population labels, and number of interactions of the predictive interaction sets generated from the estimated parameters. 
As a comparison, we also run two SICRP samplers \emph{independently} for the original interaction set $\cI_n$ and the coagulated set $\cI'_n$. Figure \ref{fig:synthetic-indep} summarizes the results, and Table~\ref{tab:synthetic} presents the quantitative comparison of the predictive distributions between the independent SICRPs and joint HICRP. While the joint model and independent models perform similar in general, the joint model better captures the distributions of the coagulated interaction set.

\begin{figure}[!ht]
    \centering
    \includegraphics[width=0.95\linewidth]{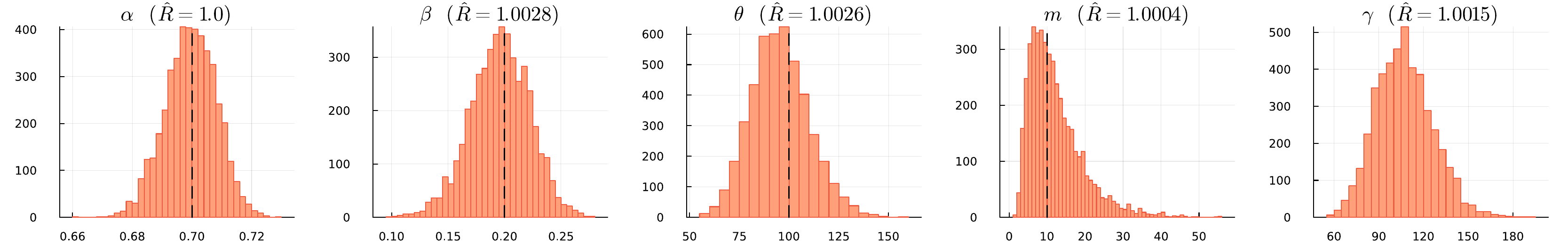}\vspace{0.2cm}
    \includegraphics[width=0.95\linewidth]{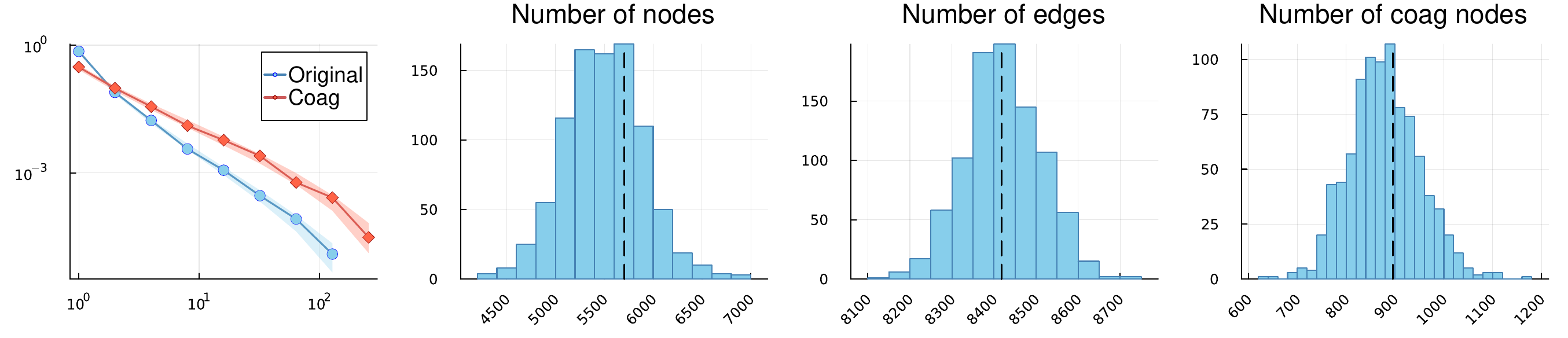}
    \caption{The results on synthetic interaction sets. The black dotted lines indicate the true values. Top: the estimated parameters with $\hat{R}$ diagnostics. Bottom: predictive statistics. For the degree distribution, the markers denote the observed degree distributions and the shaded area denotes the 95\% credible intervals of the predictive degree distributions.}
    \label{fig:synthetic-joint}
\end{figure}

\begin{figure}[!ht]
    \centering
    \includegraphics[width=0.48\linewidth]{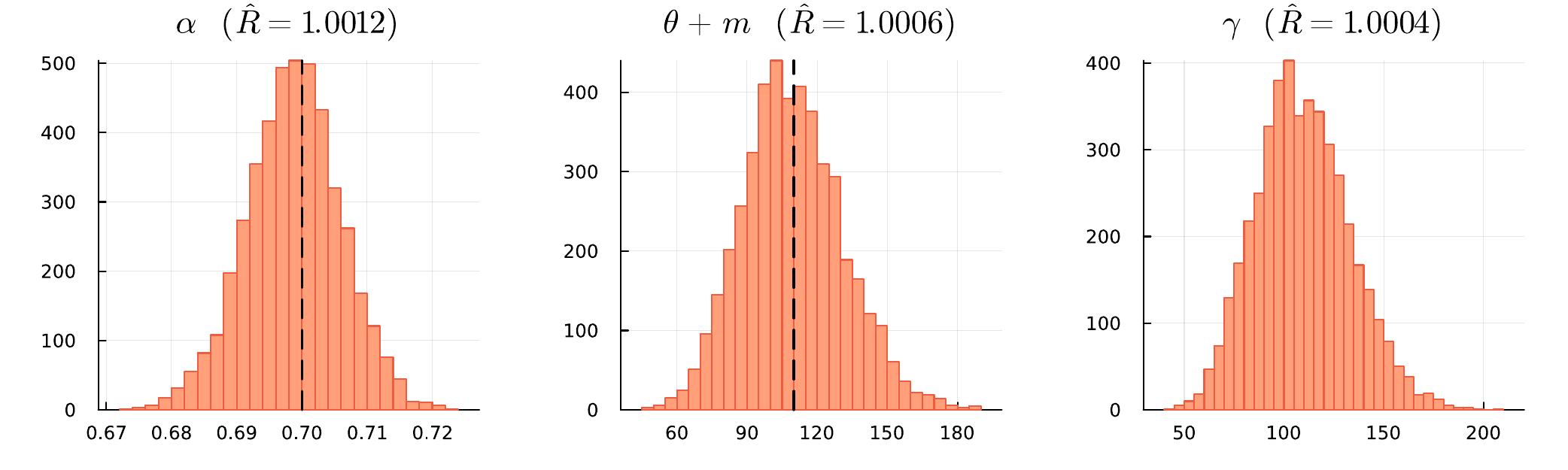}\vspace{0.2cm}
    \includegraphics[width=0.48\linewidth]{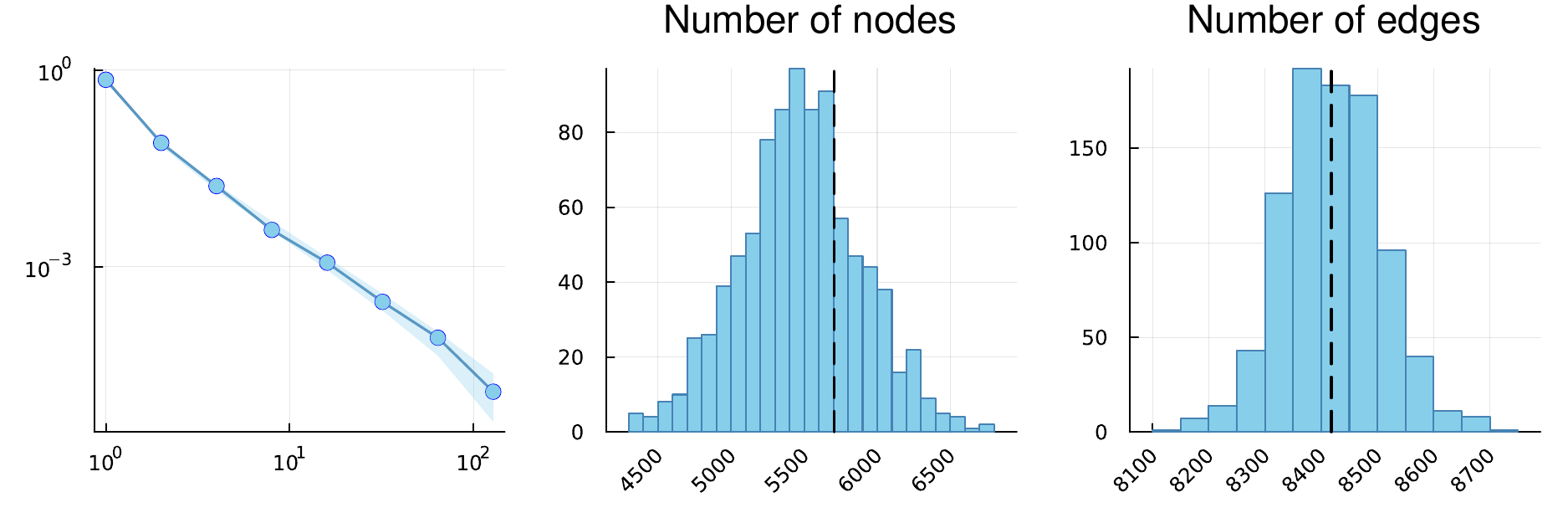}\vspace{0.2cm}
    \includegraphics[width=0.48\linewidth]{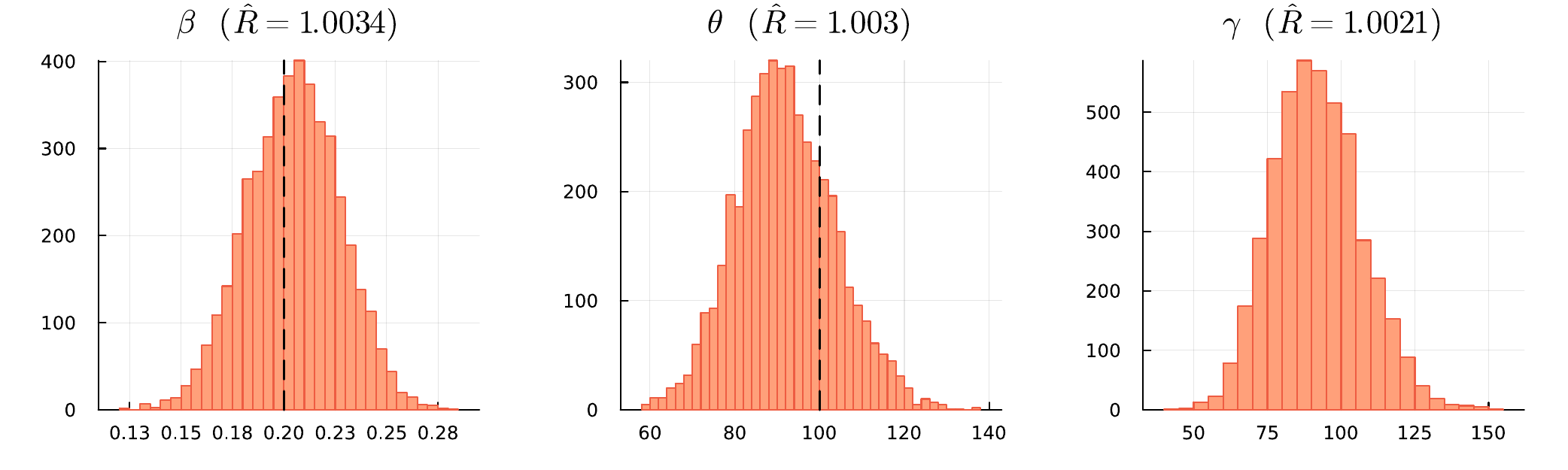}\vspace{0.2cm}
    \includegraphics[width=0.48\linewidth]{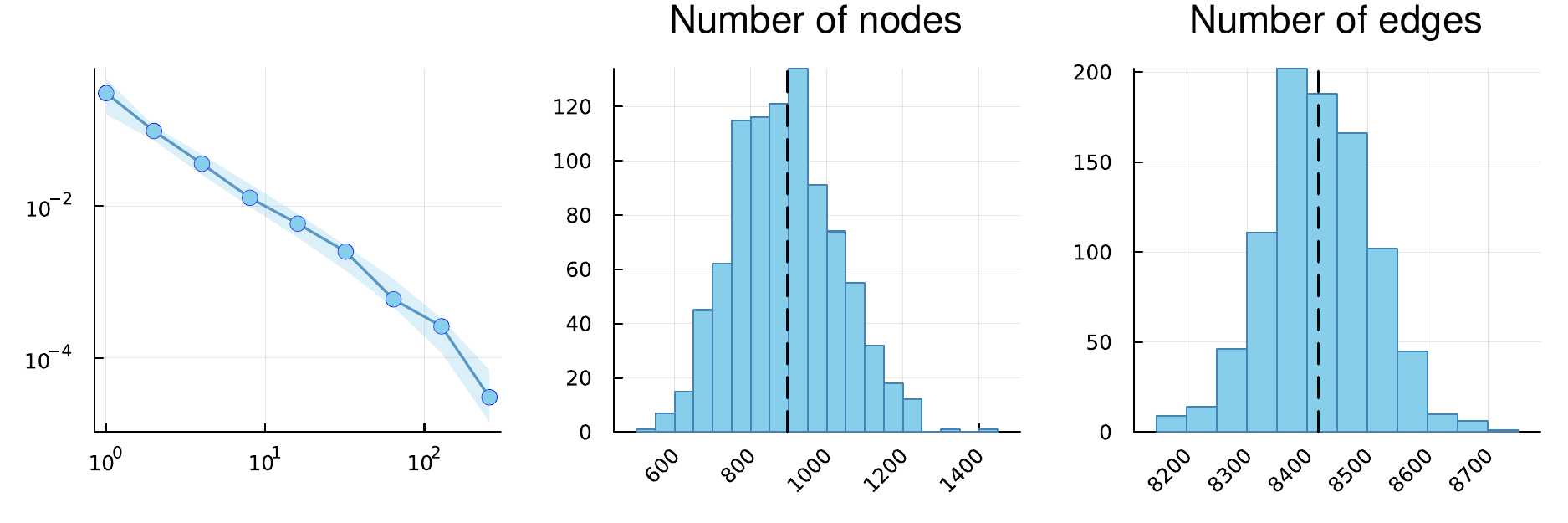}
    \caption{The results of independent SICRP samplers for the synthetic data. Top: the estimated parameters and predictive statistics for the original interaction set $\cI_n$. Bottom: the estimated parameters and predictive statistics for the coagulated interaction set $\cI'_n$. }
    \label{fig:synthetic-indep}
\end{figure}




\begin{table}[!ht]
    \centering
        \caption{Metrics for predictive statistics on synthetic data. SICRP and SICRP-COAG denote the independent SICRP run on the original interaction set $\cI_n$ and coagulated set $\cI'_n$, respectively. HICRP denotes the result with the PDGM $\emm$-coagulation HICRP run jointly on $(\cI_n, \cI_n')$.} 
        \small
    \begin{tabular}{cccccc}\toprule
                    & \makecell{\# labels\\RMSE} & \makecell{\# coag labels\\RMSE} & \makecell{\# interactions\\RMSE} & \makecell{degree dist\\$D_\mathrm{KS}$} & \makecell{coag degree dist \\$D_\mathrm{KS}$} \\\midrule
        SICRP & 474.5742 & - & 88.1626 & 0.011$\pm$0.0054 & - \\
        SICRP-COAG & -  & 135.1164  & 86.8058 & - & 0.0772$\pm$0.0453\\
        HICRP & 455.1763 & 90.7496 & 74.1134 &  0.0119$\pm$0.0060 & 0.0313$\pm$0.0117 \\\bottomrule
    \end{tabular}
    \label{tab:synthetic}
\end{table}

\subsection{Wikipedia election networks}

We further test our model on Wikipedia election network\footnote{\url{https://snap.stanford.edu/data/wiki-Elec.html}} from \cite{Leskovec2010}, which is a network consisting of English Wikipedia users voting for admin elections. The original graph contains 7,118 users (nodes) with 103,675 votes (edges). Each edge consists of a voter id, candidate id, and a voting timestamp. Given this graph, we create a coagulated graph as follows. We first collect all the timestamps and quantize them into 20,000 bins. A user (node) may vote or be voted multiple times, so there may be multiple timestamps associated with the user. Among those multiple timestamp values, we choose the latest timestamp as a representative timestamp of the user. Then, all nodes with their latest timestamp belonging to the same bin are merged into a single node. The resulting coagulated graph contains 4,308 nodes and 103,675 edges (the number of edges does not change).

Given the graphs, we run our PDGM $\emm$-coagulation HICRP sampler for the original and the coagulated graphs, and compare it to the SICRP samplers run independently for the original and coagulated graphs. Note here that the independent samplers are fundamentally limited, as they cannot capture the association between the nodes in two graphs, that is, the independent SICRPs cannot model the parent-children structures between the nodes in two graphs, while the joint model can naturally capture those structures and generate graphs with proper association between the nodes. We run three independent MCMC chains for 50,000 steps and collect posterior samples after 25,000 burn-in samples with a thinning interval of 10.

\begin{figure}[!ht]
    \centering
    \includegraphics[width=0.95\linewidth]{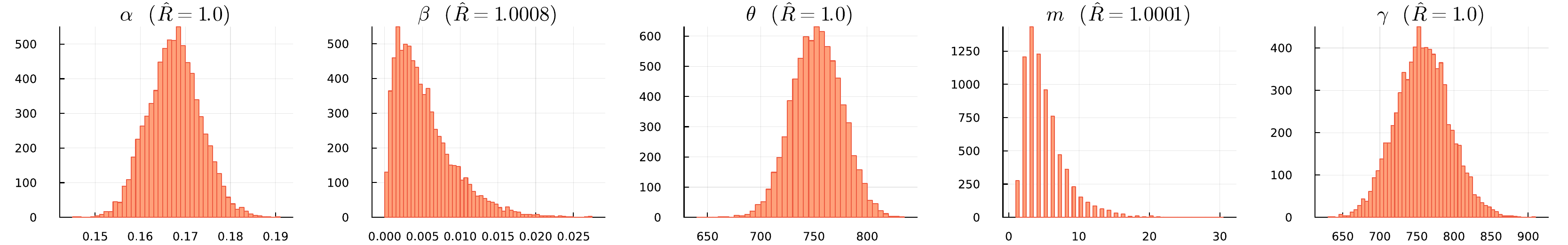}\vspace{0.2cm}
    \includegraphics[width=0.95\linewidth]{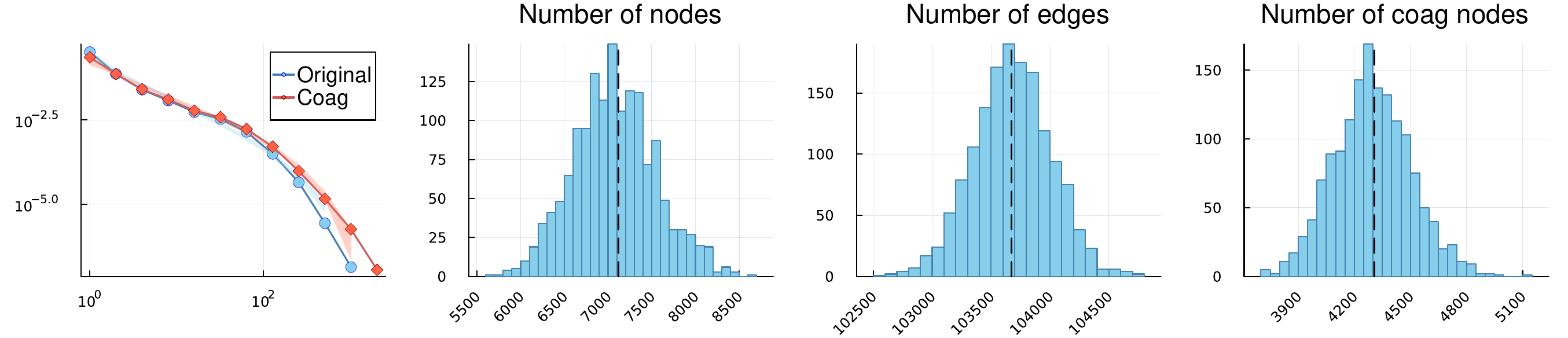}
    \caption{The results on Wikipedia election networks. Top: the estimated parameters with $\hat{R}$ diagnostics. Bottom: predictive statistics. For the degree distribution, the markers denote the observed degree distributions and the shaded area denotes the 95\% credible intervals of the predictive degree distributions.}
    \label{fig:wikivote-joint}
\end{figure}

\begin{figure}[!ht]
    \centering
    \includegraphics[width=0.48\linewidth]{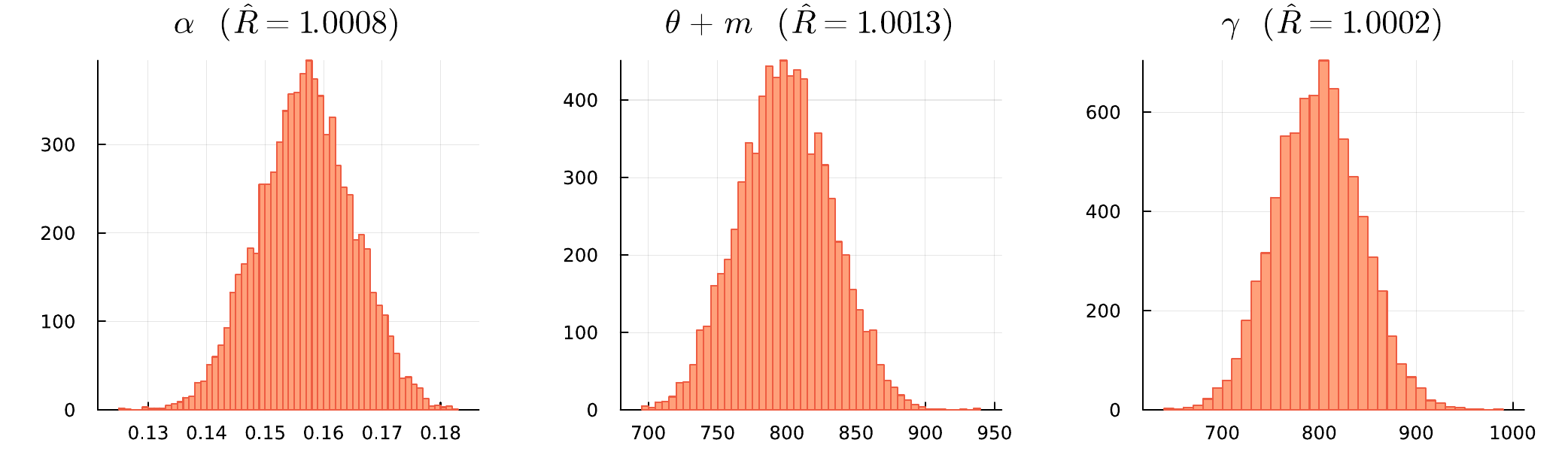}\vspace{0.2cm}
    \includegraphics[width=0.48\linewidth]{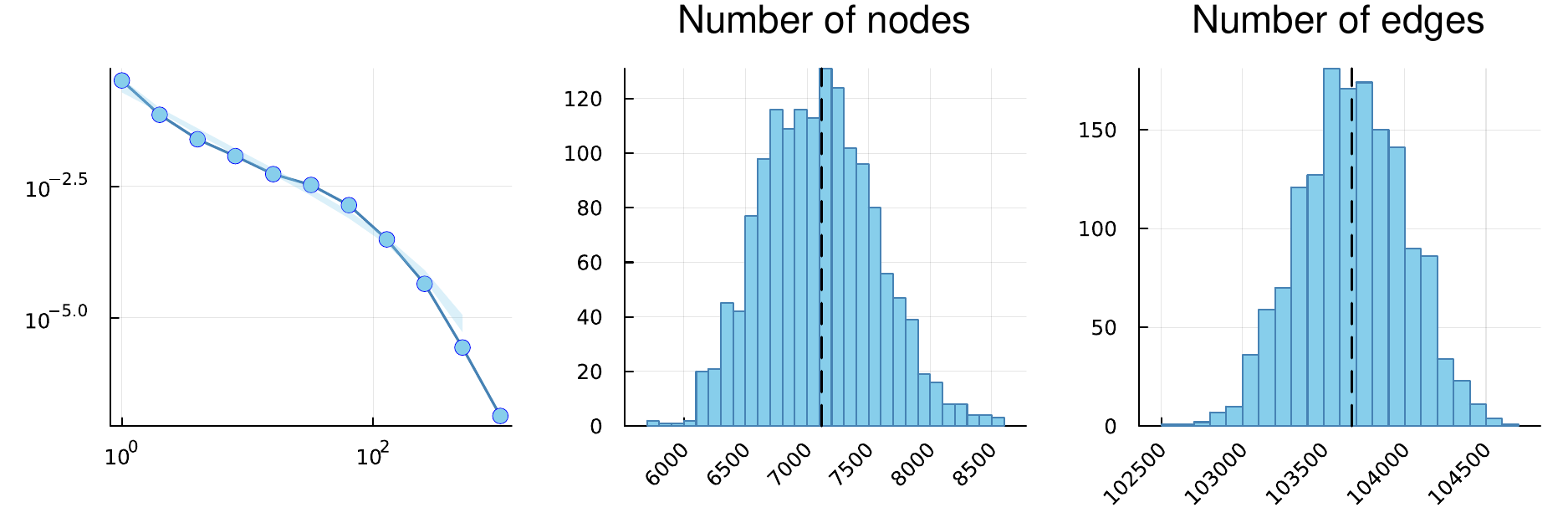}\vspace{0.2cm}
    \includegraphics[width=0.48\linewidth]{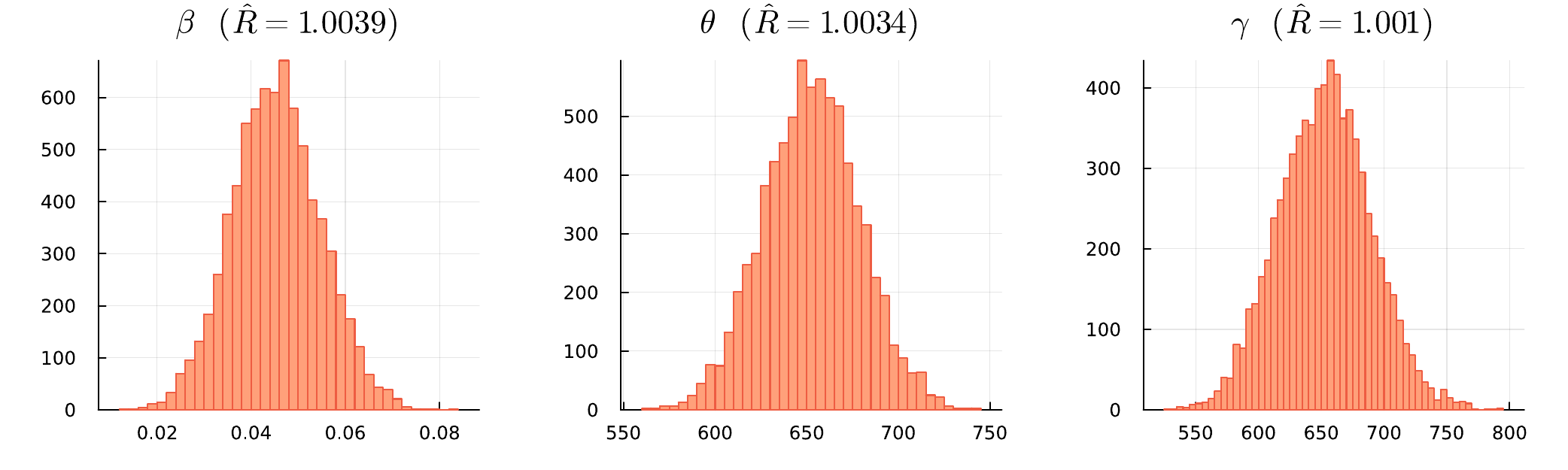}\vspace{0.2cm}
    \includegraphics[width=0.48\linewidth]{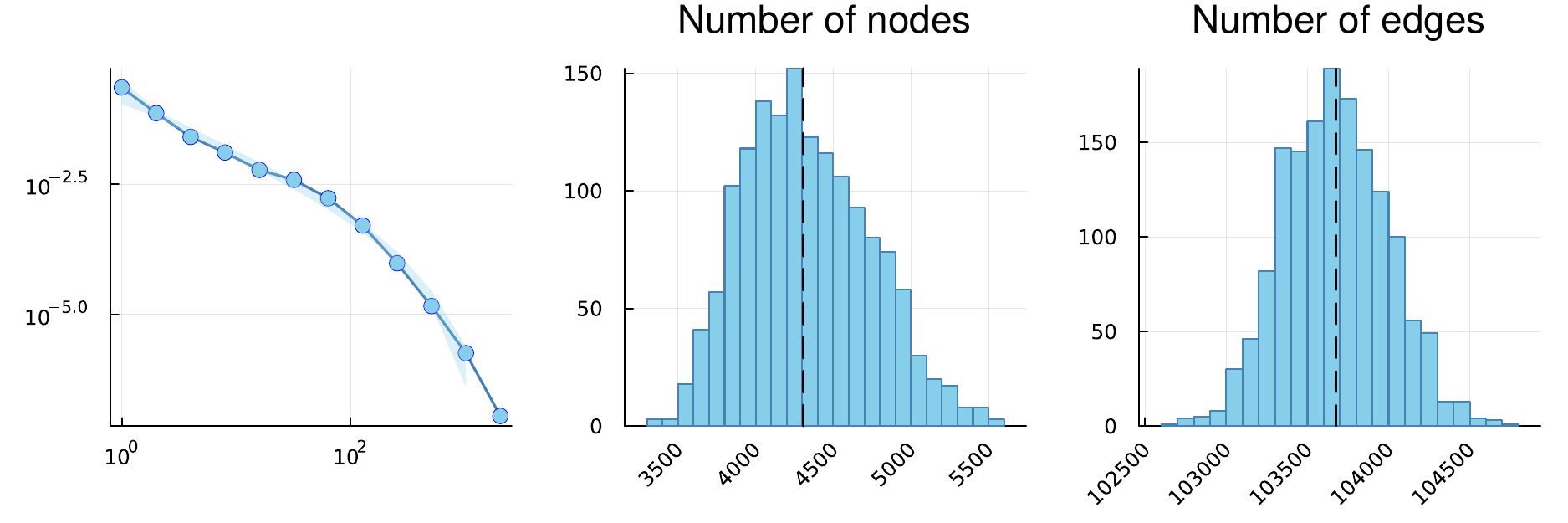}
    \caption{The results of independent SICRP samplers on Wikipedia election networks. Top: the estimated parameters and predictive statistics from the original Wikipedia election network. Bottom: the estimated parameters and predictive statistics from the coagulated network. }
    \label{fig:wikivote-indep}
\end{figure}

Figure~\ref{fig:wikivote-joint} shows the result from the HICRP sampler, and Figure~\ref{fig:wikivote-indep} shows the result with the independent SICRP samplers. Both joint model and independent models estimate similar parameter values, and generate reasonable predictive graphs. Table~\ref{tab:wikivote} compares the posterior predictive metrics. Similar to the synthetic data experiments,  the joint model and independent models perform similarly in general, but the joint model better captures the degree distributions of the coagulated network.

\begin{table}[!ht]
    \centering
        \caption{Metrics for predictive statistics on Wikipedia election network. SICRP and SICRP-COAG denote the independent SICRPs run on the original graph and coagulated graph, respectively. HICRP denotes the result with the PDGM $\emm$-coagulation HICRP run jointly on two graphs.} 
        \small
    \begin{tabular}{cccccc}\toprule
                    & \makecell{\# nodes\\RMSE} & \makecell{\# coag nodes\\RMSE} & \makecell{\# edges\\RMSE} & \makecell{degree dist\\$D_\mathrm{KS}$} & \makecell{coag degree dist \\$D_\mathrm{KS}$} \\\midrule
        SICRP & 458.6639 & - & 324.3876 & 0.0795$\pm$0.0203 & - \\
        SICRP-COAG & -  & 407.1574  & 321.7863 & - & 0.0745$\pm$0.0232\\
        HICRP & 474.1999 & 204.5967 & 327.4884 &  0.0771$\pm$0.0182 & 0.0560$\pm$0.0158 \\\bottomrule
    \end{tabular}
    \label{tab:wikivote}
\end{table}

To highlight that HICRP indeed captures the joint structure between the original graph and coagulated graph, we further compare two predictive statistics to observed graphs. Note that the independent SICRPs cannot compute the joint statistics. We first compare the number of children distributions of the predictive graphs to the actual number of children distribution, which is displayed in the left panel of Figure~\ref{fig:wikivote-joint_stats}. The model successfully simulates the number of children distribution. We also conducted a comparison of the average parent degrees, which we define as follows. In the original graphs, we categorized nodes based on their degrees using binning. Then, for each bin, we identified the parent nodes in the coagulated graphs and computed their average degrees. This analysis aims to observe the pattern of how the degrees of the original graph decrease in the coagulated graph following the coagulation operation. The right panel of Figure~\ref{fig:wikivote-joint_stats} compares the observed average parent degrees (depicted as blue dots) with the predictive parent degrees (illustrated as a shaded area representing the 95\% credible interval). Although there appears to be a disparity between the actual values, the predictive distribution adeptly captures the underlying pattern.

\begin{figure}[!ht]
    \centering
    \includegraphics[width=0.4\linewidth]{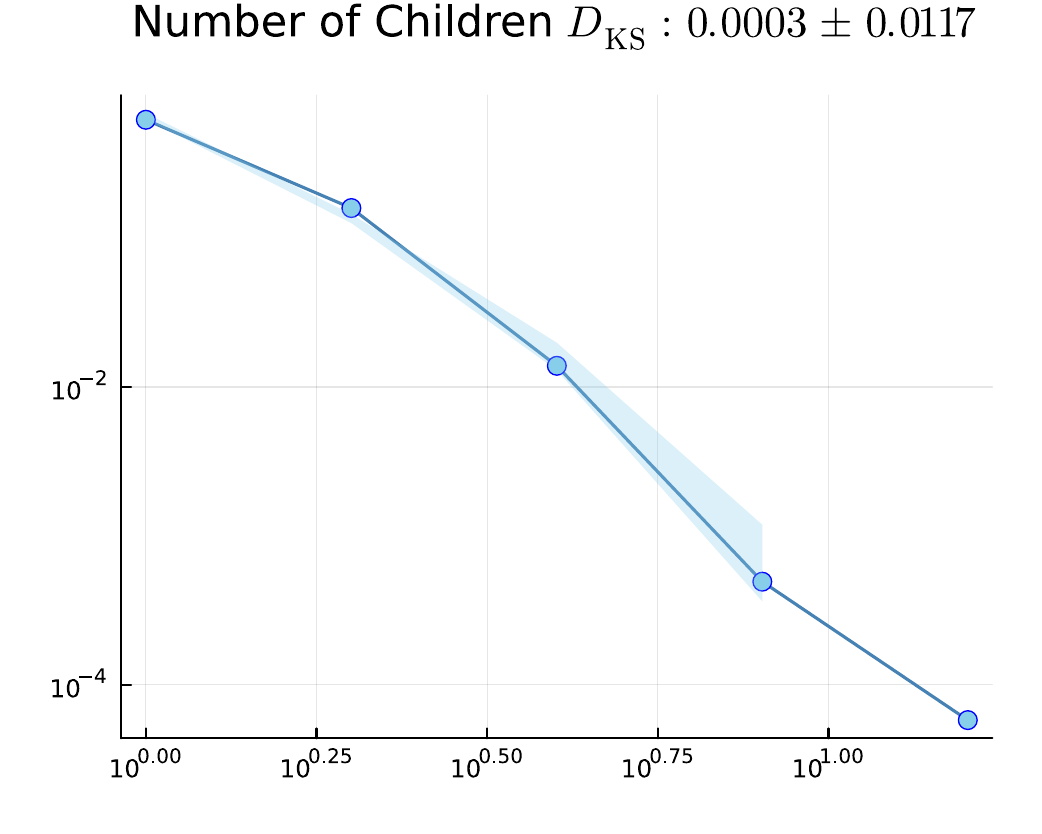}    \includegraphics[width=0.4\linewidth]{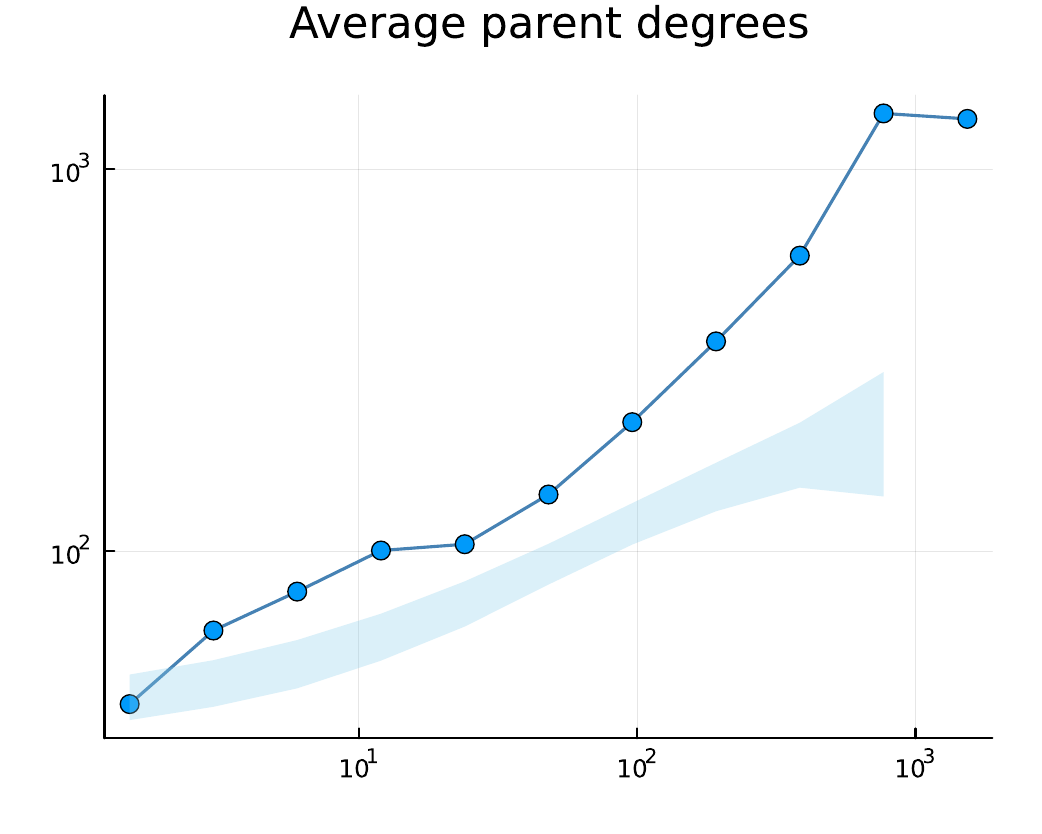}    
    \caption{Left: number of children distributions. Right: average parent degrees. The markers denote the observed distribution and the shaded area denotes the 95\% credible interval of the predictive distributions.}
    \label{fig:wikivote-joint_stats}
\end{figure}
\mbox{}

\section*{Acknowledgments}
LFJ was supported by grant RGC-GRF 16301521 of the Hong Kong SAR.
%
%


\section{Supplementary material: Background and proofs}

\subsection{\mathwrap{Relation of the two  parameter model to the  $\CF$-GG model}}
Proposition~\ref{prop:cfpois1} relates the one-parameter CF-GG model
to the $\ICRP$ by
\be{
\SICRP(0,\theta; \mathrm{Uniform}(0,\theta), \tau^{-1} \delta_2) =  \CF_{0,\tau}(\theta)
}
for $\tau>0$ and $\theta>0$. However, the one parameter model does not fit many real world networks well.
For example, for fixed $\tau$ and large $\theta$, the number of edges ($N(\tau^{-2}\gamma_\theta^2)$) is order $(\theta/\tau)^2$, and the number of vertices corresponds to the number of unique labels in the CRP, which is order $\theta \log\clr{2(\theta/\tau)^2 + \theta}$; see \cite[(3.13)]{Pitman2006}. 
Thus, for fixed $\tau$, the number of edges is essentially of order the number of vertices squared and the graph is dense. The two parameter CF model provides much more flexible behavior; see \cite{Caron2017} and \cite{Caron2023}, but is difficult to analyze. We have argued that our two parameter $\SICRP(\alpha,\theta;  \tau^{-1} \delta_2)$ is a simpler alternative, and the next result, 
 which follows in a straightforward way from  \cite[Proposition~21]{Pitman1997a}, shows how this $\SICRP$ and the two-parameter $\CF$ networks are related. We call the distribution of a random network without its vertex labels the \emph{unlabeled version}.

\begin{theorem}\label{thm:gcrp2cf}
Fix $0< \alpha < 1$, $\tau>0$ and let $(\gamma_s)_{s\geq1}$ be a standard gamma subordinator with L\'evy measure $\rho_{0,1}$ defined at~\eq{eq:gglm}. The unlabeled versions of
$
\CF_{\alpha, \tau}\clr{ \alpha \tau^{-\alpha} \gamma_{\theta/\alpha}}
$ 
and
$
\SICRP(\alpha, \theta; \tau^{-1} \delta_2)
$ have the same distribution. 
\end{theorem}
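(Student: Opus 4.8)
The plan is to show that, once the (artificial, i.i.d.) vertex labels are discarded, both random networks are produced by the \emph{same} two-stage recipe: first sample a random decreasing probability vector $(P_i)_{i\ge1}$ on $\IN$; then, given $(P_i)$ and an independent count $M$, form $M$ directed edges by drawing $2M$ labels i.i.d.\ from $(P_i)$ and grouping them into consecutive ordered pairs $(2i-1,2i)$ (vertices receiving no label never appear, so there are no isolated vertices). Two instances of this recipe have the same unlabeled-network law exactly when the pair $\bclr{M,(P_i)_{i\ge1}}$ agrees in distribution -- indeed the unlabeled network is a deterministic function of the partition of $[2M]$ recording which of the $2M$ draws coincide, together with the pairing. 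So the whole argument reduces to computing that pair on each side.

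On the $\SICRP$ side: by Definition~\ref{def:icrp} with $\nu=\tau^{-1}\delta_2$ the interaction-length law is $\delta_2$ and the number of interactions is $N(\tau^{-1}\gamma_\theta^2)$, so by Proposition~\ref{prop:part2ICRP} the network is $\inter\bclr{\Pi_{2N},(2,\dots,2)}$ with $\Pi\sim\CRP(\alpha,\theta)$, $\gamma_\theta\sim\Gam(\theta,1)$, and $M:=N\mid\gamma_\theta\sim\Po(\tau^{-1}\gamma_\theta^2)$, all suitably independent; and the species-sampling representation of the $\CRP$ (see \cite[Ch.~3]{Pitman2006}) identifies $\Pi$ with the partition induced by an i.i.d.\ sample from $\sum_iP_i\delta_{\xi_i}$, $(P_i)\sim\PD(\alpha,\theta)$, $(\xi_i)$ i.i.d.\ nonatomic, so here the pair is $\bclr{M,(P_i)}$ with $(P_i)\sim\PD(\alpha,\theta)$ independent of $(M,\gamma_\theta)$. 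On the $\CF$ side: with $T:=\alpha\tau^{-\alpha}\gamma_{\theta/\alpha}$ independent of $\Theta_{\alpha,\tau}$, conditionally on $T$ the weights $(W_i)_{U_i\le T}$ are the jumps over $[0,T]$ of a generalized-gamma subordinator with L\'evy density $\rho_{\alpha,\tau}$; given those weights, Poisson superposition and thinning give that the number of directed edges (loops included) is $M\sim\Po(S^2)$ with $S:=\sum_iW_i$, and that given $M$ the $2M$ endpoints are i.i.d.\ from $(W_i/S)_i$. So the unlabeled $\CF_{\alpha,\tau}(T)$ is driven by $\bclr{M,(W_i/S)}$ with $M\mid S\sim\Po(S^2)$.

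It remains to match the two pairs, i.e.\ to prove that $\bclr{S^2,(W_i/S)_{i\ge1}}$ (normalized jumps in decreasing order) and $\bclr{\tau^{-1}\gamma_\theta^2,\PD(\alpha,\theta)}$ are equal in law, with independent coordinates in both. This is exactly the subordinator representation of the two-parameter Poisson--Dirichlet distribution: by \cite[Proposition~21]{Pitman1997a} (after reducing to $\tau=1$ via the scaling $\mathrm{GG}(\alpha,\tau)_s\eqd\tau^{-1}\mathrm{GG}(\alpha,1)_{\tau^{\alpha}s}$), stopping the generalized-gamma subordinator at the Gamma-randomized time $T$ makes its total mass $S$ Gamma-distributed and \emph{independent} of its decreasing sequence of normalized jumps, which is $\PD(\alpha,\theta)$. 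The precise Gamma law of $S$, and with it the identity $S^2\eqd\tau^{-1}\gamma_\theta^2$ that aligns the two Poisson intensities, drops out of a short Laplace-transform computation in which mixing over $\gamma_{\theta/\alpha}\sim\Gam(\theta/\alpha,1)$ inverts the generalized-gamma Laplace exponent $\tfrac1\alpha((\lambda+\tau)^{\alpha}-\tau^{\alpha})$ to leave $(1+\lambda/\tau)^{-\theta}$; the limit $\alpha\downarrow0$ recovers the base case already recorded in Proposition~\ref{prop:cfpois1}. Feeding the matched pairs into the common recipe yields equality of the unlabeled networks.

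The structural reduction and the two one-sided identifications are routine (Poisson thinning plus the standard $\CRP$/$\PD$ correspondence); the substance of the proof is the matching step, and within it the \emph{independence} of the total mass from the normalized jumps together with the identification of the latter as $\PD(\alpha,\theta)$. For a generalized-gamma subordinator stopped at a deterministic time this independence fails, and it is restored only by the precise Gamma randomization $T=\alpha\tau^{-\alpha}\gamma_{\theta/\alpha}$; extracting this is the one genuinely nontrivial input, imported from \cite[Proposition~21]{Pitman1997a}. Beyond that, one must bookkeep the powers of $\tau$ through the generalized-gamma scaling so the two Poisson intensities coincide, and check that ``unlabeled'' really does collapse the comparison to $\bclr{M,(P_i)}$ -- that is, that the i.i.d.\ labels in either construction carry no residual information once stripped.
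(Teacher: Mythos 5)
Your strategy is the one the paper intends: the paper offers no detailed proof of this theorem, saying only that it ``follows in a straightforward way from \cite[Proposition~21]{Pitman1997a},'' and your reduction of the unlabeled comparison to the pair consisting of the edge count and the ranked normalized weights, followed by the Gamma-randomized-time identification of the generalized gamma subordinator, is exactly that route. The structural steps (Poisson superposition and thinning on the $\CF$ side, the paintbox identification of the $\CRP$ with sampling from $\PD(\alpha,\theta)$ on the $\SICRP$ side, and the recognition that the crux is the independence of the total mass from the normalized jumps, which holds only for this particular randomization of the time parameter) are all correct and correctly emphasized.

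However, the one quantitative step on which everything turns is asserted rather than verified, and as written it is internally inconsistent. Your Laplace-transform computation is correct: mixing $\exp(-T\,\alpha^{-1}[(\lambda+\tau)^\alpha-\tau^\alpha])$ over $T=\alpha\tau^{-\alpha}\gamma_{\theta/\alpha}$ gives $(1+\lambda/\tau)^{-\theta}$, so the total mass satisfies $S\sim\Gam(\theta,\tau)$, i.e.\ $S\eqd\tau^{-1}\gamma_\theta$ and hence $S^2\eqd\tau^{-2}\gamma_\theta^2$. This contradicts the identity $S^2\eqd\tau^{-1}\gamma_\theta^2$ that you state and that the theorem as parameterized requires, since $\SICRP(\alpha,\theta;\tau^{-1}\delta_2)$ has $\phi_\nu(t)=\tau^{-1}t^2$ and therefore $\Po(\tau^{-1}\gamma_\theta^2)$ edges. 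The two Poisson intensities you are matching thus differ by a factor of $\tau$; your argument as set up establishes the statement with $\tau^{-2}\delta_2$ in place of $\tau^{-1}\delta_2$ (and the same factor arises already in Proposition~\ref{prop:cfpois1}, where $S=\tau^{-1}\gamma_\theta$ gives $\Po(\tau^{-2}\gamma_\theta^2)$ directed edges). You need either to locate the missing factor of $\tau$ or to record explicitly that the normalization in the statement must be adjusted; a proof cannot declare that the alignment ``drops out'' of a computation that, when actually performed, yields a different constant. Everything else, including the $\PD(\alpha,\theta)$ identification and the independence of the two coordinates on both sides, goes through.
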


\begin{remark}
The unlabeled analog of Proposition~\ref{prop:cfpois1} is recovered by sending $\alpha\to0$,
noting that $\alpha \tau^{-\alpha} \gamma_{\theta/\alpha}\to \theta$ almost surely and in $L^2$.
\end{remark}

\begin{remark}\label{rem:labswd}
Considering the vertex-labels of $\CF_{\alpha, \tau}\bclr{ \alpha \tau^{-\alpha} \gamma_{\theta/\alpha}}$ and $\SICRP(\alpha, \theta; \tau^{-1} \delta_2)$ of Theorem~\ref{thm:gcrp2cf}, in the $\CF$ case these are uniform on the interval $(0, \alpha \tau^{-\alpha} \gamma_{\theta/\alpha})$, whereas in the $\SICRP$ case these are Uniform$(0,1)$. Because the number of vertices and edges in the CF-GG graph depends on 
the length of ``time'' considered in the subordinator, the labels will depend on the graph structure through $\gamma_{\theta/\alpha}$, which does not seem meaningful.
\end{remark}

\subsection{Proof of Theorem~\ref{thm:pdgmmdual}}\label{sec:PDGMcoagfragpf}

\begin{proof}[Proof of Theorem~\ref{thm:pdgmmdual}]
To show $(i) \implies (ii)$, we first consider the special cases where either $\emm=0$, or $\alpha=\beta$. 
When $\emm=0$, the result is exactly (ii)$\implies$(i) of \cite[Theorem~12]{Pitman1999b} with their $\beta$  equal to our $\beta/\alpha$; cf.\ Remark~\ref{rem:relpdgmop}. 
For $\beta=\alpha$, we show the result first for $\emm=1$, which is conceptually simpler. If  $\emm=1$, the statement is the integer partition analog 
of the interval partition result of \cite[Theorem~3.1]{Dong2006}; cf.\ Remark~\ref{rem:relpdgmop}; which can be connected through Kingman's ``paintbox'' construction
of the $\CRP$, see \cite[Chapter~3]{Pitman2006}, which we now describe. We first define the two-parameter Poisson-Dirichlet $\PD(\alpha, \theta)$ distribution, which can be realized as
 random variables $P_1 > P_2> \cdots>0$ with $\sum_{i\geq1}P_i=1$. The $P_i$ are the reordering of the random variables 
 $(B_i \prod_{j=1}^{i-1}(1-B_j))_{i=1}^\infty$ with independent $B_i  \sim \mathrm{Beta}(1-\alpha, \theta + i\alpha)$,  obtained via beta ``stick-breaking''. 
Letting $(U_i)_{i\geq1}$ and $(V_i)_{i\geq1}$ be i.i.d.\ sequences of $\mathrm{Uniform}(0,1)$ random variables independent of $(P_i)_{i\geq1}\sim \PD(\alpha,\theta)$, and setting $S_j:=\sum_{i=1}^j P_i$ and $S_0:=0$. Define 
\be{
X_i = \sum_{j\geq 1} U_j \II\bcls{S_{j-1} < V_i < S_j}. 
}
In words, the $X_i$ are conditionally i.i.d.\ with distribution putting mass $P_j$ at the value $U_j$.
The key facts about this construction are that the partition of $[n]$ given by $\Pi:=\samp(X_1,\ldots, X_n)\sim\CRP_n(\alpha,\theta)$, and, given $\samp(X_1,\ldots, X_n)$, the probability $X_{n+1} = X_i$ is $(\abs{\clc{1\leq j\leq n: X_j = X_i}}-\alpha)/(n+\theta)$; i.e., the $\CRP$ probability of joining the block associated to $X_i$; and the probability $X_{n+1}$ is a new, unseen $U_j$ value is
given by the remaining complementary proability, which is the $\CRP$ probability of starting a new block. 

Now, define the random variable $I$ by the identity  $X_{n+1} = U_{I}$, and let  $(Q_i)_{i\geq1}\sim \PD(\alpha,1-\alpha)$ independent of the above. By construction,  $\IP(I=i) = P_i$ marginally, and thus \cite[Theorem~3.1]{Dong2006} states that the concatenation of $(P_{I} Q_i)_{i\geq1}$ and $(P_j)_{j\not=I}$ put in decreasing order, 
is distributed as $\PD(\alpha, \theta+1)$.
Letting $(\widetilde U_i)_{i\geq1}$ be i.i.d.\ $\mathrm{Uniform}(0,1)$ independent of the variables above, and setting $T_j:=\sum_{i=1}^j Q_i$ with $T_0:=0$, define
\be{
\wt X_i = \sum_{j\not=I} U_j \II\bcls{S_{j-1} < V_i < S_j} + \sum_{j\geq1} \wt U_j \II\bcls{S_{I-1}+P_I T_{j-1} < V_i < S_{I-1}+P_I T_{j}}.
}
In words, the $\wt X_i$ are conditionally i.i.d.\ with distribution putting mass $P_j$ at the value $U_j$ for $j\not=I$,
and putting mass $P_IQ_j$ at the valued $\wt U_j$.
Thus, from \cite[Theorem~3.1]{Dong2006} and the paintbox construction, the partition of $[n]$ given by $\samp(\wt X_1,\ldots, \wt X_n)\sim\CRP_n(\alpha,\theta+1)$. 
Since we use the same $V_i$'s to generate both the $X_i$ and the $\wt X_i$, we also claim that 
\be{
\samp(\wt X_1,\ldots, \wt X_n)\eqd \fragd{1}_{\alpha\to\alpha, \theta}\bclr{\samp(X_1,\ldots, X_n)}.
}
This is because for any $X_i \not=U_{I}$, we have $X_i =\wt X_i$, and the blocks associated to those values are the same in the two samples. For the other indices (if any), the values from the sample satisfying
$\clc{1\leq j\leq n: X_j = U_I}$ are relabelled conditionally independently according to  $\sum_{i\geq1}  Q_i \delta_{ \wt U_i }$, and the partition generated from the sample is distributed as a $\CRP$ with parameters $(\alpha, 1-\alpha)$. This matches the description of the fragmentation operation $\fragd{1}_{\alpha\to\alpha, \theta}$ as long as the probabilities of choosing a block to fragment there are the same as the probabilities given by $J_1$. But this is the case, since, according to $\CRP$ probabilities, 
\be{
\IP\bclr{ X_{n+1} = X_i | ( X_1,\ldots, X_n)} = \frac{\abs{\clc{1\leq j\leq n: X_j = X_i}}-\alpha}{n+\theta}.
}

To cover the general-$\emm$ case, we first work at the level of random masses and generalize \cite[Theorem~3.1]{Dong2006}. Retaining the notation above, 
we set 
\be{
\samp\clr{X_{n+1},\ldots, X_{n+\emm}}=:\{C_1, \ldots, C_K \},
} 
and define $I_j$ via $U_{I_j}$ being the label associated to $C_j$. Given these random variables, let $(Q_i\sp{j})_{i\geq1}\sim \PD(\alpha, \abs{C_j}-\alpha)$ be conditionally independent for $j=1,\ldots, K$, and  independent of $(U_i\sp{j})_{i,j\geq1}$ which are i.i.d.\ $\mathrm{Uniform}(0,1)$. 
Then we claim that iterating \cite[Theorem~3.1]{Dong2006} implies that the concatenation of $(P_{I_j} Q_i\sp{j})_{i\geq1}$, $j=1,\ldots, K$ and $(P_j)_{j\not\in \{I_1,\ldots, I_K\}}$ put in decreasing order, 
is distributed as $\PD(\alpha, \theta+\emm)$.
This is because the fragmentation operation entails
choosing a mass at random, say $P_{I_1}$,  according to length, then fragmenting it by $\PD(\alpha, 1-\alpha)$. Iterating again,
we choose a mass according to length and fragment it by $\PD(\alpha, 1-\alpha)$. Now, the mass chosen is either a fragment of $P_{I_1}$ or not. In the first case, the normalized fragments of $P_{I_1}$ are distributed $\PD(\alpha, 1-\alpha)$, and, by \cite[Theorem~3.1]{Dong2006}, choosing one at random and fragmenting by $\PD(\alpha,1-\alpha)$ is distributionally the same as fragmenting $P_{I_1}$ by $\PD(\alpha, 2-\alpha)$. The probability of choosing a fragment of $P_{I_1}$ is the same as the second customer sitting at the same table as the first  customer in $\CRP(\alpha, \theta)$. Continuing in this way, we see that $\{C_1,\ldots, C_K\}$ encodes how many times each top-level mass is fragmented when iterating the procedure $\emm$ times, and if a mass is chosen $c$ times, it must be fragmented by $\PD(\alpha, c-\alpha)$.

With this result, we can move forward analogous to the case $\emm=1$, setting  $T_j\sp\ell:=\sum_{i=1}^j Q_i\sp\ell$ with $T_0\sp\ell:=0$, and defining
\be{
\wt X_i = \sum_{j\not\in \{I_1,\ldots, I_K\} } U_j \II\bcls{S_{j-1} < V_i < S_j} + \sum_{j=1}^K  \sum_{\ell\geq1}  \wt U_\ell\sp{j} \II\bcls{S_{I_j-1}+P_{I_j}  T_{\ell-1}\sp{j}  < V_i <S_{I_j-1}+P_{I_j}  T_{\ell}\sp{j}}.
}
From the $\emm$-generalization of \cite[Theorem~3.1]{Dong2006} derived above, we have that $\samp(\wt X_1,\ldots, \wt X_n) \sim \CRP_n(\alpha, \theta+\emm)$. 
Since we use the same $V_i$'s to generate both the $X_i$ and the $\wt X_i$, it is also the case that 
\be{
\samp(\wt X_1,\ldots, \wt X_n)\eqd \fragd{\emm}_{\alpha, \theta}\bclr{\samp(X_1,\ldots, X_n)}.
}
This is because for any $X_i \not\in \{U_{I_1},\ldots, U_{I_K}\}$, we have $X_i =\wt X_i$, and the blocks associated to those values are the same in the two samples. For the other indices (if any), the values from the sample satisfying
$\clc{1\leq k\leq n: X_k = U_{I_j}}$ are relabelled conditionally independently according to  $\sum_{i\geq1}  Q_i\sp{j} \delta_{ \wt U_i\sp{j} }$, and the partition generated from the sample is distributed as a $\CRP$ with parameters $(\alpha, \abs{C_j}-\alpha)$. This matches the description of the fragmentation operation $\fragd{\emm}_{\alpha, \theta}$ as long as the probabilities of choosing  blocks to fragment match those associated to $(J_1,\ldots, J_\emm)$, but this is the case from $\CRP$ probabilities, since
\be{
\IP\bclr{ X_{n+\ell} = X_i |(X_1,\ldots, X_{n+\ell-1})} = \frac{\abs{\clc{1\leq j\leq n+\ell-1: X_j = X_i}}-\alpha}{n+\theta+\ell-1},
}
which recursively match those of $\law(J_\ell | J_1,\ldots, J_{\ell-1})$.

Now, for the general case, using the $\emm=0$ and $\alpha=\beta$ cases just derived, 
if $\Pi\sim\CRP_n(\beta, \theta)$, then
\be{
\fragd{0}_{\beta\to\alpha, \theta+\emm}\circ\fragd{\emm}_{\beta\to\beta,\theta}(\Pi)\sim \PD(\alpha, \theta+\emm).
}
To see the composition is the same as $\fragd{\emm}_{\beta \to\alpha, \theta}$, note that the operation
  $\fragd{\emm}_{\beta\to\beta,\theta}$  conditionally independently fragments the block $A_j$ of $\Pi$ 
by a $\CRP(\beta, N_j - \beta)$, and then the operation $\frag_{\beta \to\alpha, \theta+\emm}$ fragments each of those blocks by independent $\CRP(\alpha, -\beta)$ partitions. But according to the case $\emm=0$ already shown,  fragmenting a $\CRP(\beta, N_j - \beta)$ by a $\CRP(\alpha,-\beta)$, is the same as fragmenting once by a $\CRP(\alpha, N_j -\beta)$, which is exactly the $\fragd{\emm}_{\alpha\to \beta, \theta}$ operation.

For $(ii)\implies (i)$, the case that $\emm=0$ is exactly (i)$\implies$(ii) of \cite[Theorem~12]{Pitman1999b} with their $\beta$  equal to our $\beta/\alpha$; cf.\ Remark~\ref{rem:relpdgmop}. 
Though not needed for our proof, note also 
the case $\beta=\alpha$ and $\emm=1$ follows from \cite[Theorem~3.1]{Dong2006}, along with Kingman's ``paintbox'' construction
of the $\CRP$ in a much simpler way than the fragmentation operation: Each block of the partition of $[n]$ 
corresponds to a mass in a $\PD(\alpha, \theta+1)$ interval partition, and  merging of masses in the interval partition  can be mapped in a one-to-one way to its corresponding integer partition block; cf.\ Remark~\ref{rem:relpdgmop}.
Similarly, for the general $\emm$ case, it's enough to show the result at the level of masses, which we do now.

For our proof in the general case, a key fact we use is \cite[Corollary~20]{Pitman1996}: Let $(\wt P_i)_{i\geq1} \sim \PD(\beta, \theta+K_\emm\beta)$, independent of $\{C_1, \ldots,C_{K_\emm}\} \sim \CRP_\emm(\beta, \theta)$, the random partition of $[\emm]$ defined in the coagulation description of Definition~\ref{def:PDGMcoagfrag}. Given $\clc{C_1, \ldots, C_{K_\emm}}$,  
let
\be{
T=(T_0, T_1, T_2, \ldots, T_{K_\emm})\sim \Dir\bclr{\theta + K_\emm\beta, |C_1|-\beta, \ldots, |C_{K_\emm}|-\beta }.
}
Then the ranking in decreasing order of the concatenation of $(T_{i})_{i=1}^{K_\emm}$ and $\clr{T_0 \wt P_i}_{i\geq 1}$  is distributed as $\PD(\beta, \theta)$. 

To see how this helps, note that our coagulation operation takes $(P_i)_{i\geq1}\sim \PD(\alpha, \theta +\emm)$ to the reranking of the concatenation of 
$
\bclr{\sum_{i: M_i = j}P_i}_{j=1}^{K_\emm}$ and $(\sum_{i: M_i =0, i \in B_j} P_i)_{j\geq1}$, where the $M_i$ are conditionally i.i.d., taking values according to the Dirichlet vector 
$S$ given at~\eq{eq:sdir},  independent of $\{B_i\}_{i\geq1}\sim\CRP\bclr{\frac{\beta}{\alpha}, \frac{\theta + \beta K_\emm}{\alpha}}$.
We will show that jointly: $T_j \eqd \sum_{i: M_i = j}P_i$ for $j=1,\ldots, K_\emm$, and the reranking in decreasing order of
$(\sum_{i: M_i =0, i \in B_j} P_i)_{j\geq1}$ has the same distribution as
the analogous reranking of  $\clr{T_0 \wt P_i}_{i\geq 1}$, from which the result follows
from \cite[Corollary~20]{Pitman1996} stated above.

To show the claimed joint equality in distribution, we use \cite[Proposition~21]{Pitman1997a}: 
For $0<\alpha< 1$, let $\tau$ be a subordinator with L\'evy measure $\Gamma(1-\alpha)^{-1}\alpha x^{-\alpha -1} e^{-x} dx$ independent of the 
standard gamma subordinator $(\gamma(s))_{s\geq0}$. Fixing $\tilde \theta>0$ and writing $\zeta_1>\zeta_2>\cdots$ for the ranked jump sizes
  of the subordinator $\tau$ appearing in the in the interval $(0,\gamma(\tilde \theta/\alpha))$, we have
\be{
\frac{1}{\tau\bclr{\gamma(\tilde \theta/\alpha)}} \bclr{\zeta_1, \zeta_2,\ldots} \sim \PD(\alpha, \tilde \theta),
}
and is independent of $\tau\bclr{\gamma(\tilde \theta/\alpha)} \sim \mathrm{Gamma}(\tilde \theta, 1)$. 
A closely related fact that we use below is that $\bclr{\tau\bclr{\gamma(s/\alpha)}}_{s\geq0}$ is a standard gamma subordinator.

We apply this construction with $\tilde \theta =\theta+\emm$, and condition on $\{C_1,\ldots, C_{K_\emm}\}$, which is used in both the construction of $T$ and $S$ (and others).
We then write $t_j=\alpha^{-1}\sum_{i=1}^j  (\abs{C_i}-\beta)$ for $j=1,\ldots, K_\emm$ and 
$t=t_{K_\emm}+\alpha^{-1} (\theta+K_\emm\beta)$. 
Specializing the construction above of \cite[Proposition~21]{Pitman1997a}, if $\zeta_1>\zeta_2>\cdots$ are the ranked jump sizes
  of the subordinator $\tau$ appearing in the in the interval $(0,\gamma(t))$, then 
\be{
\frac{1}{\tau\bclr{\gamma(t)}} \bclr{\zeta_1, \zeta_2,\ldots} \sim \PD(\alpha, \theta +\emm ),
}
and we use the variables on the left hand side to construct the coagulation operation of Definition~\ref{def:PDGMcoagfrag}.
The beta-gamma algebra implies that we can represent the Dirichlet vector $S$ (given at~\eq{eq:sdir} in the coagulation operation of Definition~\ref{def:PDGMcoagfrag}) by
\be{
S =\frac{1}{\gamma(t)} \bclr{ \gamma(t)-\gamma(t_{K_\emm}), \gamma(t_1), \gamma(t_2)-\gamma(t_1),\ldots, \gamma(t_{K_\emm})-\gamma(t_{K_\emm-1})},
}
which is
independent of $\gamma(t)$. Similarly, and using the fact that $(\tau\clr{\gamma(s/\alpha)})_{s\geq0}$ is a standard gamma subordinator, we can represent
\be{
T=\frac{\bclr{ \tau\bclr{\gamma(t)}-\tau\bclr{\gamma(t_{K_\emm})},\tau\bclr{ \gamma(t_1)}, \tau\bclr{\gamma(t_2)}-\tau\bclr{\gamma(t_1)},\ldots, \tau\bclr{\gamma(t_{K_\emm})}-\tau\bclr{\gamma(t_{K_\emm-1})}}}{\tau\bclr{\gamma(t)}} 
}
which is independent of $\tau\bclr{\gamma(t)}$.

Now, conditioning on $S$ and considering the coagulation operation of Definition~\ref{def:PDGMcoagfrag},  
because the positions of jumps of $\tau$ in the interval $(0,\gamma(t))$ are conditionally i.i.d.\ distributed $\mathrm{Uniform}(0,\gamma(t))$, the probability that a jump is in the interval $\clr{\gamma(t_{j-1}), \gamma(t_j)}$ is $S_j$ for $j=1,\ldots, K_\emm$ and the probability it is in 
in the interval $\clr{\gamma(t)- \gamma(t_{K_\emm}), \gamma(t)}$ is $S_0$. Thus we can realise the $M_i$ variables via the location of jumps. For jumps occurring at times in intervals of the form $\clr{\gamma(t_{j-1}), \gamma(t_j)}$,
 the coagulation operation  sums such jumps together, which gives a mass of  total length $(\tau(\gamma\clr{t_j})- \tau(\gamma\clr{t_{j-1}}))/\tau\clr{\gamma(t)}= T_j$, $j=1,\ldots, K_\emm$, which implies part of the  equality in joint distribution we need to show. 
 
 For the remaining jumps of $\tau$
 in the interval  $\clr{\gamma(t)- \gamma(t_{K_\emm}), \gamma(t)}$, the coagulation operation combines jumps according to a $\CRP\bclr{\frac{\beta}{\alpha}, \frac{\theta+ K_\emm \beta}{\alpha}}$ applied to their indices (in any order, by exchangeability).
 Thus, by beta-gamma algebra and using the expression for $T_0$, the final equality we need to show is that
 the jumps of $\tau$ in the interval  $\clr{\gamma(t)- \gamma(t_{K_\emm}), \gamma(t)}$ divided by $(\tau(\gamma(t))-\tau(\gamma(t_{K_\emm})))$ and  coagulated by 
 the $\CRP\bclr{\frac{\beta}{\alpha}, \frac{\theta+ K_\emm \beta}{\alpha}}$ has the same distribution as $(\wt P_i)_{i\geq1}\sim\PD(\beta, \theta+K_\emm\beta)$ after reranking, as  in our description of \cite[Corollary~20]{Pitman1999b}.
 But using independent increments,
 \cite[Proposition~21]{Pitman1997a} described above implies that these  
 normalized jumps are distributed (reranked) as $\PD(\alpha, \theta+K_\emm \beta)$,
 and the desired result then follows from the $\emm=0$ case of the duality result from \cite[Theorem~12]{Pitman1999b}. 
\end{proof}

\end{document}